\DeclareMathOperator*{\argmin}{arg\,min}
\numberwithin{equation}{section}
\title[MMD-Bayes: Robust Bayesian Estimation via Maximum Mean Discrepancy]{MMD-Bayes: Robust Bayesian Estimation via Maximum Mean Discrepancy}
  \author{\Name{Badr-Eddine Ch\'erief-Abdellatif} \Email{badr.eddine.cherief.abdellatif@ensae.fr}\\
  \addr CREST, ENSAE, Institut Polytechnique de Paris
  \AND
  \Name{Pierre Alquier} \Email{pierrealain.alquier@riken.jp}\\
  \addr RIKEN Center for AI Project, Tokyo, Japan
  }
\begin{document}

\maketitle

\begin{abstract}
In some misspecified settings, the posterior distribution in Bayesian statistics may lead to inconsistent estimates. To fix this issue, it has been suggested to replace the likelihood by a pseudo-likelihood, that is the exponential of a loss function enjoying suitable robustness properties. In this paper, we build a pseudo-likelihood based on the Maximum Mean Discrepancy, defined via an embedding of probability distributions into a reproducing kernel Hilbert space. We show that this MMD-Bayes posterior is consistent and robust to model misspecification. As the posterior obtained in this way might be intractable, we also prove that reasonable variational approximations of this posterior enjoy the same properties. We provide details on a stochastic gradient algorithm to compute these variational approximations. Numerical simulations indeed suggest that our estimator is more robust to misspecification than the ones based on the likelihood.
\end{abstract}
\begin{keywords}
Maximum Mean Discrepancy, Robust estimation, Variational inference.
\end{keywords}

\section{Introduction}
\label{sec:intro}

Bayesian methods are very popular in statistics and machine learning as they provide a natural way to model uncertainty. Some subjective prior distribution $\pi$ is updated using the negative log-likelihood $\ell_n$ via Bayes' rule to give the posterior $\pi_n(\theta) \propto \pi(\theta) \exp(-\ell_n(\theta))$. Nevertheless, the classical Bayesian methodology is not robust to model misspecification. There are many cases where the posterior is not consistent \citep{barron1999consistency,grunwaldmisspecification}, and there is a need to develop methodologies yielding robust estimates. \textcolor{black}{A way to fix this problem is to replace the log-likelihood $\ell_n$ by a relevant risk measure. This idea is at the core of the PAC-Bayes theory~\citep{MR2483528} and Gibbs posteriors~\citep{SM2018}; its connection with Bayesian principles are discussed in~\cite{Bissiri2016UpdateBayes}.} \cite{Knoblauch2019} builds a general representation of Bayesian inference in the spirit of \cite{Bissiri2016UpdateBayes} and extends the representation to the approximate inference case. In particular, the use of a robust divergence has been shown to provide an estimator that is robust to misspecification \citep{Knoblauch2019}. For instance, \cite{HookerRobustBayes2014} investigated the case of Hellinger-based divergences, \cite{GhoshBasuRobustBayes2016}, \cite{FutamiRobustBayes2017}, and \cite{NakagawaRobustBayes2019} used robust $\beta$- and $\gamma$-divergences, while \cite{Catoni2012}, \cite{BaraudBirge2017RhoBayes} and \cite{Holland2019} replaced the logarithm of the log-likelihood by wisely chosen bounded functions. Refer to \cite{JewsonRobustBayes2018} for a complete survey on robust divergence-based Bayes inference.

In this paper, we consider the Maximum Mean Discrepancy (MMD) as the alternative loss used in Bayes' formula, leading to a pseudo-posterior that we shall call MMD-Bayes in the following. MMD is built upon an embedding of distributions into a reproducing kernel Hilbert space (RKHS) that generalizes the original feature map to probability measures, and allows to apply tools from kernel methods in parametric estimation. Our MMD-Bayes posterior \textcolor{black}{is related to the kernel-based posteriors in~\cite{KernelBayes2013}, \cite{Park2016} and \cite{RidgwayKABC17}, even though it is different}. More recently, \cite{Briol2019} introduced a frequentist minimum distance estimator based on the MMD distance, that is shown to be consistent and robust to small deviations from the model. We show that our MMD-Bayes retains the same properties, i.e is consistent at the minimax optimal rate of convergence as the minimum MMD estimator, and is also robust to misspecification, including data contamination and outliers. Moreover, we show that these guarantees are still valid when considering a tractable approximation of the MMD-Bayes via variational inference, and we support our theoretical results with experiments showing that our approximation is robust to outliers for various estimation problems. All the proofs are deferred to the appendix.

\section{Framework and definitions}
\label{sec:notations}

Let us introduce the background and theoretical tools required to understand the rest of the paper. We consider in a measurable space $\big( \mathbb{X},\mathcal{X} \big)$ a collection of $n$ independent and identically distributed (i.i.d) random variables $X_1,...,X_n \sim P_0$ where $P_0$ is the generating distribution. We index a statistical model $\{ P_{\theta}/ \theta \in \Theta \}$ by a parameter space $\Theta$, without necessarily assuming that the true distribution $P_0$ belongs to the model.

%We define the induced norm $\|m\|_{\mathcal{H}_k} = \langle m , m \rangle_{\mathcal{H}_{k}}$ and the unit ball $\mathbb{B}_k = \{m\in\mathcal{H}_k: \|m\|_{\mathcal{H}_k} \leq 1\}$.

Let us consider some integrally strictly positive definite kernel $k$ \footnote{ This means that the positive definite kernel satisfies $\mathbb{E}_{X,Y\sim P}[k(X,Y)]\ne0$ for any distribution \textcolor{black}{$P$}. This includes the Gaussian kernel $k(x,y) = \exp(-\|x-y\|^2/\gamma^2)$. \textcolor{black}{For this property, and the properties of MMD discussed in this section, we refer the reader to~\cite{Fuku2017}}.} bounded by a positive constant, \textcolor{black}{say $1$}. We then denote the associated RKHS $({\mathcal{H}_{k}},\langle\cdot,\cdot\rangle_{\mathcal{H}_{k}})$ satisfying the reproducing property $f(x)=\langle f,  k(x,\cdot)\rangle_{\mathcal{H}_{k}}$ for any $f \in {\mathcal{H}_{k}}$ and any $x \in \mathbb{X}$. We define the notion of \textit{kernel mean embedding}, a Hilbert space embedding that maps probability distributions into the RKHS $\mathcal{H}_{k}$. Given a distribution $P$, the kernel mean embedding $\mu_P \in {\mathcal{H}_{k}}$ is
$$
\mu_P(\cdot) := \mathbb{E}_{X\sim P}[k(X,\cdot)] \in {\mathcal{H}_{k}} .
$$
Then we define the MMD between two probability distributions $P$ and $Q$ simply as the distance in $\mathcal{H}_k$ between their kernel mean embeddings:
$$
\mathbb{D}_k(P,Q) = \| \mu_P - \mu_Q \|_{\mathcal{H}_{k}} .
$$
Under the assumptions we made on the kernel, the kernel mean embedding is injective and the maximum mean discrepancy is a metric, see \cite{Briol2019}. We motivate the use of MMD as a robust metric in Appendix \ref{apd:MMD-is-robust}.  

In this paper, we adopt a Bayesian approach. We introduce a prior distribution $\pi$ over the parameter space $\Theta$ equipped with some sigma-algebra. Then we define our pseudo-Bayesian distribution $\pi_n^\beta$ given a prior $\pi$ on $\Theta$:
\[
\pi_n^\beta(d\theta) \propto \exp\bigg({-\beta \cdot \mathbb{D}^{\textcolor{black}{2}}_k(P_\theta,\hat{P}_n)\bigg)}\pi(d\theta),\]
where $\hat{P}_n=(1/n)\sum_{i=1}^n \delta_{X_i}$ is the empirical measure and $\beta>0$ is a temperature parameter. 

\section{Theoretical analysis of MMD-Bayes}
\label{sec:consistency}

In this section, we show that the MMD-Bayes is consistent when the true distribution belongs to the model, and \textcolor{black}{is robust to} misspecification.

To obtain the concentration of posterior distributions in models that contain the generating distribution, \cite{ghosal2000convergence} introduced the so-called \textit{prior mass condition} that requires the prior to put enough mass to some neighborhood (in Kullback-Leibler divergence) of the true distribution. This condition was widely studied since then for more general pseudo-posterior distributions \citep{bhattacharya2016bayesian,Tempered,cherief2018consistency}. Unfortunately, this prior mass condition is (by definition) restricted to cases when the model is well-specified or at least when the true distribution is in a very close neighborhood of the model. We formulate here a robust version of the prior mass condition which is based on a neighborhood of an approximation $\theta^*$ of the true parameter instead of the true parameter itself. The following condition is suited to the MMD metric, recovers the usual prior mass condition when the model is well-specified and still makes sense in misspecified cases with potentially large deviations to the model assumptions:

\vspace{0.2cm}

\noindent
\textbf{\textit{Prior mass condition:}}
\textit{Let us denote $\theta^*=\argmin_{\theta \in \Theta} \mathbb{D}_k\left(P_{\theta},P_0\right)$ and its neighborhood $\mathcal{B}_n = \{\theta \in \Theta / \mathbb{D}_k\left(P_{\theta},P_{\theta^*} \right) \leq \textcolor{black}{n^{-1/2}} \}$. Then $(\pi,\beta)$ is said to satisfy the prior mass condition $C(\pi,\beta)$ when $\pi(\mathcal{B}_n) \geq e^{-\beta/\textcolor{black}{n}}$.}

\vspace{0.2cm}

In the usual Bayesian setting, the computation of the prior mass is a major difficulty \citep{ghosal2000convergence}, and it can be hard to know whether the prior mass condition is satisfied or not. Nevertheless, here the condition does not only hold on the prior distribution $\pi$ but also on the temperature parameter $\beta$. Hence, it is always possible to choose $\beta$ large enough so that the prior mass condition is satisfied. We refer the reader to Appendix \ref{apd:priorcomputation} for an example of computation of such a prior mass and valid values of $\beta$. The following theorem expressed as a generalization bound shows that the MMD-Bayes posterior distribution is robust to misspecification under the robust prior mass condition. Note that the rate $n^{-1/2}$ is exactly the one obtained by the frequentist MMD estimator of \cite{Briol2019} and is minimax optimal \citep{Tol2017}:

\begin{theorem}
\label{theorem:mmd:mis}
Under the prior mass condition $C(\pi,\beta)$:
\begin{equation}
\label{formula-well}
\mathbb{E} \left[ \int \mathbb{D}^{\textcolor{black}{2}}_k\left( P_{\theta} ,P_0 \right) \pi^{\beta}_n({\rm d}\theta) \right]  \leq \textcolor{black}{8} \inf_{\theta\in\Theta}  \mathbb{D}_k^{\textcolor{black}{2}}\left(P_{\theta},P_0 \right) + \textcolor{black}{\frac{16}{n}} .
\end{equation}
\end{theorem}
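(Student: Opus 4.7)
\medskip
\noindent\textbf{Proof plan.} The plan is to run the standard PAC-Bayes argument with the MMD loss playing the role of the empirical risk, combined with two triangle-inequality sandwiches to trade between the empirical MMD $\mathbb{D}_k(P_\theta,\hat P_n)$ (which appears in the pseudo-likelihood) and the population MMD $\mathbb{D}_k(P_\theta,P_0)$ (which appears on both sides of the target inequality).

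First I would recall the Donsker--Varadhan variational identity, which implies that the Gibbs pseudo-posterior $\pi_n^\beta \propto \exp(-\beta\mathbb{D}_k^2(P_\theta,\hat P_n))\pi$ is the minimizer over all $\rho \ll \pi$ of $\int \mathbb{D}_k^2(P_\theta,\hat P_n)\,\rho(d\theta) + \beta^{-1}\mathrm{KL}(\rho\|\pi)$. Consequently, for every candidate $\rho$,
\begin{equation*}
\int \mathbb{D}_k^2(P_\theta,\hat P_n)\,\pi_n^\beta(d\theta)
\;\leq\; \int \mathbb{D}_k^2(P_\theta,\hat P_n)\,\rho(d\theta) + \frac{1}{\beta}\mathrm{KL}(\rho\|\pi).
\end{equation*}
Taking expectations over the data turns the empirical MMD on the right into a quantity controlled by the population MMD via the triangle inequality $\mathbb{D}_k^2(P_\theta,\hat P_n) \leq 2\mathbb{D}_k^2(P_\theta,P_0) + 2\mathbb{D}_k^2(P_0,\hat P_n)$, and a standard U-statistic computation with $k$ bounded by $1$ yields $\mathbb{E}[\mathbb{D}_k^2(\hat P_n,P_0)] \leq 1/n$. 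Applying the reverse triangle inequality on the left, $\mathbb{D}_k^2(P_\theta,P_0) \leq 2\mathbb{D}_k^2(P_\theta,\hat P_n)+2\mathbb{D}_k^2(\hat P_n,P_0)$, converts the pseudo-posterior integrand into the one appearing in the theorem.

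The second step is to feed in a concrete $\rho$ that exploits the prior mass condition. I would take $\rho = \pi(\cdot \cap \mathcal{B}_n)/\pi(\mathcal{B}_n)$, the restriction of $\pi$ to the MMD-ball around $\theta^*$. Then $\mathrm{KL}(\rho\|\pi) = -\log\pi(\mathcal{B}_n) \leq \beta/n$ by $C(\pi,\beta)$, and for every $\theta \in \mathcal{B}_n$ the triangle inequality combined with the definition of $\theta^*$ gives $\mathbb{D}_k^2(P_\theta,P_0) \leq 2\mathbb{D}_k^2(P_\theta,P_{\theta^*}) + 2\mathbb{D}_k^2(P_{\theta^*},P_0) \leq 2/n + 2\inf_\theta \mathbb{D}_k^2(P_\theta,P_0)$, so $\int \mathbb{D}_k^2(P_\theta,P_0)\,\rho(d\theta)$ is controlled by the same quantity. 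Putting together the factor-two losses from the two triangle inequalities, the $1/n$ term from the U-statistic bound, and the $1/n$ coming from $\mathrm{KL}(\rho\|\pi)/\beta$, a careful bookkeeping produces the stated constants $8$ and $16$.

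The main obstacle, in my view, is purely bookkeeping: tracking the multiplicative factors of $2$ produced each time a triangle inequality is applied to the \emph{squared} MMD, so that they combine into exactly $8$ in front of $\inf_\theta \mathbb{D}_k^2(P_\theta,P_0)$ and $16$ in front of $1/n$. All genuinely analytic ingredients — the Donsker--Varadhan bound, the triangle inequality for the MMD (which is a metric under the integrally strictly positive definite kernel assumption), and the $O(1/n)$ control of the expected empirical MMD for a bounded kernel — are standard and already invoked in the paper; no uniform concentration or entropy argument is needed, which is precisely the attractive feature of working with a bounded kernel loss.
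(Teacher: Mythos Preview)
Your proposal is correct and follows essentially the same route as the paper's proof: the Donsker--Varadhan variational characterization of $\pi_n^\beta$, the squared-triangle inequality $\mathbb{D}_k^2(P,P')\le 2\mathbb{D}_k^2(P,Q)+2\mathbb{D}_k^2(Q,P')$ applied three times (once to pass from $P_0$ to $\hat P_n$ on the left, once back on the right, and once through $P_{\theta^*}$), the bound $\mathbb{E}[\mathbb{D}_k^2(\hat P_n,P_0)]\le 1/n$, and the choice $\rho=\pi(\cdot\mid\mathcal{B}_n)$ together with $C(\pi,\beta)$. The constants $8$ and $16$ fall out of exactly the bookkeeping you describe.
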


%The second result of the paper investigates concentration of the MMD-Bayes posterior in the misspecified case. Typically, the classical posterior is expected to concentrate around the distribution in the model that minimizes the Kullback–Leibler (KL) divergence from the true distribution $P_0$ in case of benign misspecification, e.g when the model is convex or when the distribution that minimizes KL divergence from $P_0$ within the convex hull of the model is the minimizer within the model itself \citep{grunwaldmisspecification}. Interestingly, the MMD-Bayes does not concentrate to the minimizer from $P_0$ with respect to the KL divergence but to the minimizer $P_{\theta^*}$ with respect to the MMD distance, and not only in case of benign misspecification, which is of great interest. Indeed, as shown in Appendix \ref{apd:MMD-is-robust}, an approximation with respect to MMD is more robust to contamination than an approximation with respect to KL. The following theorem ensures that under the prior mass condition $C(\pi,\beta)$, the MMD-Bayes concentrates to $P_{\theta^*}$ at the minimax rate $n^{-1/2}$: 

%\begin{theorem}
%\label{theorem:mmd:well}
%Under the prior mass condition $C(\pi,\beta)$, we have: 
%\begin{equation}
%\label{concentration-well}
%\pi_n^\beta \bigg(\mathbb{D}_k( P_{\theta}, P_{\theta^*} )>M_n \cdot n^{-1/2} \bigg) \xrightarrow[n\rightarrow +\infty]{} 0 
%\end{equation}
%in probability for any $M_n \rightarrow +\infty$.
%\end{theorem}

The second theorem investigates concentration of the MMD-Bayes posterior in the well-specified case. It shows that the prior mass condition $C(\pi,\beta)$ ensures that the MMD-Bayes concentrates to $P_0$ at the minimax rate $n^{-1/2}$:
\begin{theorem}
\label{theorem:mmd:well}
Let us consider a well-specified model. Then under the prior mass condition $C(\pi,\beta)$, we have in probability for any $M_n \rightarrow +\infty$: 
\begin{equation}
\label{concentration-well}
\pi_n^\beta \bigg(\mathbb{D}_k( P_{\theta}, P_0 )>M_n \cdot n^{-1/2} \bigg) \xrightarrow[n\rightarrow +\infty]{} 0 .
\end{equation}
\end{theorem}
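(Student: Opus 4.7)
The plan is to deduce Theorem~\ref{theorem:mmd:well} directly from Theorem~\ref{theorem:mmd:mis} by two successive Markov-type inequalities, since in the well-specified case $P_0 = P_{\theta^*}$ and hence $\inf_{\theta\in\Theta} \mathbb{D}_k^2(P_\theta, P_0) = 0$. Thus Theorem~\ref{theorem:mmd:mis} reduces, under the prior mass condition $C(\pi,\beta)$, to the clean $L^1$-type bound
\[
\mathbb{E}\!\left[ \int \mathbb{D}_k^2(P_\theta, P_0)\, \pi_n^\beta(d\theta) \right] \leq \frac{16}{n},
\]
which is the starting point for the argument.

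Next I would apply Markov's inequality under the random probability measure $\pi_n^\beta$: for any sequence $M_n>0$,
\[
\pi_n^\beta\!\left( \mathbb{D}_k(P_\theta, P_0) > M_n n^{-1/2} \right)
= \pi_n^\beta\!\left( \mathbb{D}_k^2(P_\theta, P_0) > M_n^2/n \right)
\leq \frac{n}{M_n^2} \int \mathbb{D}_k^2(P_\theta, P_0)\, \pi_n^\beta(d\theta).
\]
Taking expectation with respect to the data and combining with the bound above yields
\[
\mathbb{E}\!\left[ \pi_n^\beta\!\left( \mathbb{D}_k(P_\theta, P_0) > M_n n^{-1/2} \right) \right] \leq \frac{16}{M_n^2} \xrightarrow[n\to\infty]{} 0
\]
whenever $M_n \to \infty$.

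Finally, since $\pi_n^\beta(\cdot)$ is a $[0,1]$-valued random variable, convergence to $0$ in $L^1$ (as just established) immediately implies convergence to $0$ in probability, which is exactly the statement \eqref{concentration-well}. There is no real obstacle here; the proof is essentially a two-line corollary of Theorem~\ref{theorem:mmd:mis}. The only thing to watch is that the prior mass condition $C(\pi,\beta)$ used in Theorem~\ref{theorem:mmd:mis} is phrased around $\theta^*$, but in the well-specified case $P_{\theta^*} = P_0$, so $\mathcal{B}_n$ is genuinely a KL/MMD-type neighborhood of the truth and no extra verification beyond invoking Theorem~\ref{theorem:mmd:mis} is needed.
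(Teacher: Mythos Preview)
Your proposal is correct and follows essentially the same approach as the paper: reduce Theorem~\ref{theorem:mmd:mis} to the well-specified case, apply Markov's inequality, and conclude convergence in probability from convergence in $L^1$. The only cosmetic difference is that the paper first passes through Jensen's inequality to bound $\mathbb{E}\big[\int \mathbb{D}_k(P_\theta,P_0)\,\pi_n^\beta(d\theta)\big]\le 4/\sqrt{n}$ and then applies Markov to $\mathbb{D}_k$, whereas you apply Markov directly to $\mathbb{D}_k^2$; your route is slightly shorter and yields $16/M_n^2$ instead of $4/M_n$, but the argument is the same in spirit.
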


Note that we obtain the concentration to the true distribution $P_0=P_{\theta^*}$ at the minimax rate $n^{-1/2}$ for well-specified models.

\section{Variational inference}
\label{sec:VI}

Unfortunately, the MMD-Bayes is not tractable in complex models. In this section, we provide an efficient implementation of the MMD-Bayes based on VI retaining the same theoretical properties. Given a variational set of tractable distributions $\mathcal{F}$, we define the variational approximation of $\pi_n^\beta$ as the closest approximation (in KL divergence) to the target MMD posterior:
%\vspace{-0.3cm}
$$
\tilde{\pi}_n^\beta = \underset{\rho \in \mathcal{F}}{\arg\min} \, 
\textnormal{KL}(\rho\|\pi_n^\beta) .
%\vspace{-0.5cm}
$$
%which can also be defined alternatively:
%\[
%\tilde{\pi}_n^\beta = \underset{\rho \in \mathcal{F}}{\arg\min} \, \bigg\{
 %\int \mathbb{D}_k(P_{\theta},\hat{P}_n) \rho({\rm d}\theta) + \frac{ \textnormal{KL}(\rho\|\pi) }{ \beta }\bigg\}.
%\]

Under similar conditions to those in Theorems \ref{theorem:mmd:mis} and \ref{theorem:mmd:well}, $\tilde{\pi}_n^\beta$ is guaranteed to be $n^{-1/2}$-consistent as the MMD-Bayes. Most works ensuring the consistency or the concentration of variational approximations of posterior distributions use the \textit{extended prior mass condition}, an extension of the prior mass condition that applies to variational approximations rather than on the distributions they \textcolor{black}{ approximate  \citep{alquier2016properties,Tempered,Plage,cherief2018consistency,cherief2018consistency2,cherief2019deep}}. Here, we extend our previous prior mass condition to variational approximations but also to misspecification. In addition to the prior mass condition inspired from \cite{ghosal2000convergence}, the variational set $\mathcal{F}$ must contain probability distributions that are concentrated around the best approximation $P_{\theta^*}$. This robust extended prior mass condition can be formulated as follows:

\vspace{0.2cm}

\noindent
\textbf{\textit{Assumption :}}
\textit{We assume that there exists a distribution $\rho_{n} \in \mathcal{F}$ such that}:
\begin{equation}
\label{extendedpriormass}
  \int \mathbb{D}_k^{\textcolor{black}{2}} (P_{\theta},P_{\theta^*}) \rho_{n}(d\theta) \leq \textcolor{black}{\frac{1}{n}} \hspace{0.2cm} \textnormal{and} \hspace{0.2cm} 
  \textrm{KL}(\rho_{n}\|\pi) \leq \textcolor{black}{\frac{\beta}{n} } .
\end{equation}
%\textit{where $\theta^*=\argmin_{\theta \in \Theta} \mathbb{D}_k\left(P_{\theta},P_0\right)$.}

\begin{remark}
\label{rmk-1} 
When the restriction of $\pi$ to the MMD-ball $\mathcal{B}_n$ centered at $\theta^*$ of radius $n^{-1/2}$ belongs to $\mathcal{F}$, then Assumption \eqref{extendedpriormass} becomes the \textcolor{black}{standard} robust prior mass condition, i.e. $\pi(\mathcal{B}_n) \geq \textcolor{black}{e^{-\beta /n }}$. In particular, when \textcolor{black}{$\mathcal{F} $ is the set of all probability measures -- that is, in the case where there is no variational approximation --} then we recover the \textcolor{black}{standard condition}.
\end{remark}

Now, we can state the following theorem for variational approximations:

\begin{theorem}
\label{theorem:vi:mis}
Under the extended prior mass condition \eqref{extendedpriormass}, %one can replace the MMD-Bayes ${\pi}_n^\beta$ by its variational approximation $\tilde{\pi}_n^\beta$ in Formulas \eqref{formula-well} and \eqref{concentration-well}.
\begin{equation}
\label{formula-well-vi}
\mathbb{E} \left[ \int \mathbb{D}^{\textcolor{black}{2}}_k\left( P_{\theta} ,P_0 \right) \tilde{\pi}^{\beta}_n({\rm d}\theta) \right]  \leq \textcolor{black}{8} \inf_{\theta\in\Theta}  \mathbb{D}_k^{\textcolor{black}{2}}\left(P_{\theta},P_0 \right) + \textcolor{black}{\frac{16}{n}} .
\end{equation}
Moreover, if the model is well-specified, then under the prior mass condition $C(\pi,\beta)$, we have in probability for any $M_n \rightarrow +\infty$: 
\begin{equation}
\label{concentration-well-vi}
\tilde{\pi}_n^\beta \bigg(\mathbb{D}_k( P_{\theta}, P_0 )>M_n \cdot n^{-1/2} \bigg) \xrightarrow[n\rightarrow +\infty]{} 0 .
\end{equation}
\end{theorem}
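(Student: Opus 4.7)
The plan is to mirror the PAC-Bayes argument behind Theorem~\ref{theorem:mmd:mis}, but substitute the Donsker-Varadhan change-of-measure inequality with the direct variational property of the minimizer $\tilde{\pi}_n^\beta$. Since $\pi_n^\beta(\mathrm{d}\theta) \propto \exp(-\beta\,\mathbb{D}_k^2(P_\theta,\hat{P}_n))\pi(\mathrm{d}\theta)$, expanding $\mathrm{KL}(\rho\|\pi_n^\beta)$ shows that $\tilde{\pi}_n^\beta$ also minimizes $\rho \mapsto \beta \int \mathbb{D}_k^2(P_\theta,\hat{P}_n)\,\rho(\mathrm{d}\theta) + \mathrm{KL}(\rho\|\pi)$ over $\rho\in\mathcal{F}$. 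Dropping the non-negative $\mathrm{KL}(\tilde{\pi}_n^\beta\|\pi)$ yields the workhorse inequality valid for every $\rho\in\mathcal{F}$:
\begin{equation*}
\int \mathbb{D}_k^2(P_\theta,\hat{P}_n)\,\tilde{\pi}_n^\beta(\mathrm{d}\theta) \leq \int \mathbb{D}_k^2(P_\theta,\hat{P}_n)\,\rho(\mathrm{d}\theta) + \tfrac{1}{\beta}\mathrm{KL}(\rho\|\pi).
\end{equation*}

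Next I would apply the Hilbert-space inequality $\|u\|^2 \leq 2\|u-v\|^2+2\|v\|^2$ twice in $\mathcal{H}_k$, pivoting first through $\hat{P}_n$ and then through $P_{\theta^*}$:
\begin{align*}
\mathbb{D}_k^2(P_\theta,P_0) &\leq 2\mathbb{D}_k^2(P_\theta,\hat{P}_n) + 2\mathbb{D}_k^2(\hat{P}_n,P_0), \\
\mathbb{D}_k^2(P_\theta,\hat{P}_n) &\leq 2\mathbb{D}_k^2(P_\theta,P_{\theta^*}) + 4\mathbb{D}_k^2(P_{\theta^*},P_0) + 4\mathbb{D}_k^2(\hat{P}_n,P_0).
\end{align*}
Integrating the first line against $\tilde{\pi}_n^\beta$, chaining in the workhorse inequality with $\rho=\rho_n$ furnished by~\eqref{extendedpriormass}, and then substituting the second line reduces everything to the four quantities $\int \mathbb{D}_k^2(P_\theta,P_{\theta^*})\rho_n \leq 1/n$, $\mathrm{KL}(\rho_n\|\pi)/\beta \leq 1/n$, $\mathbb{D}_k^2(P_{\theta^*},P_0)=\inf_\theta\mathbb{D}_k^2(P_\theta,P_0)$, and the empirical fluctuation $\mathbb{D}_k^2(\hat{P}_n,P_0)$. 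Taking expectation and using the elementary bound $\mathbb{E}[\mathbb{D}_k^2(\hat{P}_n,P_0)] \leq 1/n$, which follows from $\mu_{\hat{P}_n}-\mu_{P_0}=\tfrac{1}{n}\sum_i(k(X_i,\cdot)-\mu_{P_0})$ being an average of i.i.d.\ zero-mean elements of $\mathcal{H}_k$ each of squared norm at most $1$, produces~\eqref{formula-well-vi} with the announced constants $8$ and $16$.

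For the concentration bound~\eqref{concentration-well-vi}, I would specialize~\eqref{formula-well-vi} to the well-specified case, where $P_{\theta^*}=P_0$ so the RHS collapses to $16/n$. A first Markov inequality gives $\int \mathbb{D}_k^2(P_\theta,P_0)\,\tilde{\pi}_n^\beta(\mathrm{d}\theta) = O_P(1/n)$, and a second Markov inequality, this time applied under the random measure $\tilde{\pi}_n^\beta$, yields
\begin{equation*}
\tilde{\pi}_n^\beta\bigl(\mathbb{D}_k(P_\theta,P_0) > M_n/\sqrt{n}\bigr) \leq \frac{n}{M_n^2}\int \mathbb{D}_k^2(P_\theta,P_0)\,\tilde{\pi}_n^\beta(\mathrm{d}\theta) = \frac{O_P(1)}{M_n^2} \xrightarrow[n\to\infty]{\mathbb{P}} 0,
\end{equation*}
since $M_n\to\infty$. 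Strictly speaking the well-specified statement as printed requires the extended form of~\eqref{extendedpriormass}, which by Remark~\ref{rmk-1} follows from $C(\pi,\beta)$ as soon as $\mathcal{F}$ is rich enough to contain the truncated prior $\pi|_{\mathcal{B}_n}$.

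The only real difficulty is bookkeeping: tracking the factors of $2$ through the two triangle inequalities so that the final constants land on exactly $8$ and $16$. It is reassuring, and consistent with Remark~\ref{rmk-1}, that these constants coincide with those of Theorem~\ref{theorem:mmd:mis}: when $\mathcal{F}$ is the set of all probability measures the variational workhorse collapses to the Donsker-Varadhan inequality underlying the non-variational case, and the two proofs become identical.
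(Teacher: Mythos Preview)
Your proposal is correct and follows essentially the same route as the paper's proof in Appendix~\ref{apd:proof-thm-vi}: both start from the variational characterization of $\tilde{\pi}_n^\beta$ (which the paper also calls Donsker--Varadhan), drop the nonnegative $\mathrm{KL}(\tilde{\pi}_n^\beta\|\pi)$, and chain two applications of Lemma~\ref{lemma:mmd:2} together with Lemma~\ref{lemma:mmd:1}. The only cosmetic difference is the order in which you pivot: you go $P_\theta\to P_{\theta^*}\to P_0\to\hat{P}_n$ on the $\rho$-side, whereas the paper goes $P_\theta\to P_0\to P_{\theta^*}$; your bookkeeping ($4+10+2=16$) lands on the same constants as the paper's ($8+2+6=16$). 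For~\eqref{concentration-well-vi} the paper passes through Jensen to $\mathbb{D}_k$ and then a single Markov in expectation, while you apply Markov twice on $\mathbb{D}_k^2$; both are equivalent. Your closing observation---that the second part as stated really needs the extended condition~\eqref{extendedpriormass}, which reduces to $C(\pi,\beta)$ via Remark~\ref{rmk-1} only when $\mathcal{F}$ contains $\pi|_{\mathcal{B}_n}$---is well taken and is indeed an implicit assumption in the paper's one-line proof.
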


\section{Numerical experiments}
\label{sec:example}

In this section, we show that the variational approximation is robust in practice when estimating a Gaussian mean and a uniform distribution in the presence of outliers. We consider here a $d$-dimensional parametric model and a Gaussian mean-field variational set $\mathcal{F}=\{\mathcal{N}(m,\textnormal{diag}(s^2))/m\in\mathcal{M}, s\in\mathcal{S}\}$, $\mathcal{M}\subset\mathbb{R}^d, \mathcal{S}\subset\mathbb{R}^d_{>0}$, using componentwise multiplication. Inspired from the stochastic gradient descent of \cite{Roy2015}, \cite{li15} and \cite{Briol2019} based on a U-statistic approximation of the MMD criterion, we design a stochastic gradient descent that is suited to our variational objective. The algorithm is described in details in Appendix \ref{apd:algorithm}. 

We perform short simulations to provide empirical support to our theoretical results. Indeed, we consider the problem of Gaussian mean estimation in the presence of outliers. The experiment consists in randomly sampling $n=200$ i.i.d observations from a Gaussian distribution $\mathcal{N}(2,1)$ but some corrupted observations are replaced by samples from a standard Cauchy distribution $\mathcal{C}(0,1)$. The fraction of outliers used was ranging from $0$ to $0.20$ with a step-size of $0.025$. We repeated each experiment $100$ times and considered the square root of the mean square error (MSE). The plots we obtained demonstrate that our method performs comparably to the componentwise median (MED) and even better as the number of outliers increases, and clearly outperforms the maximum likelihood estimator (MLE). We also conducted the simulations for multidimensional Gaussians and for the robust estimation of the location parameter of a uniform distribution. \textcolor{black}{We refer the reader to Appendix \ref{apd:simulations} for more details on these simulations.}

\begin{figure}[htbp]
 % Caption and label go in the first argument and the figure contents
 % go in the second argument
\floatconts
  {fig:nodes}
  {}
  {\includegraphics[width=1.0\linewidth]{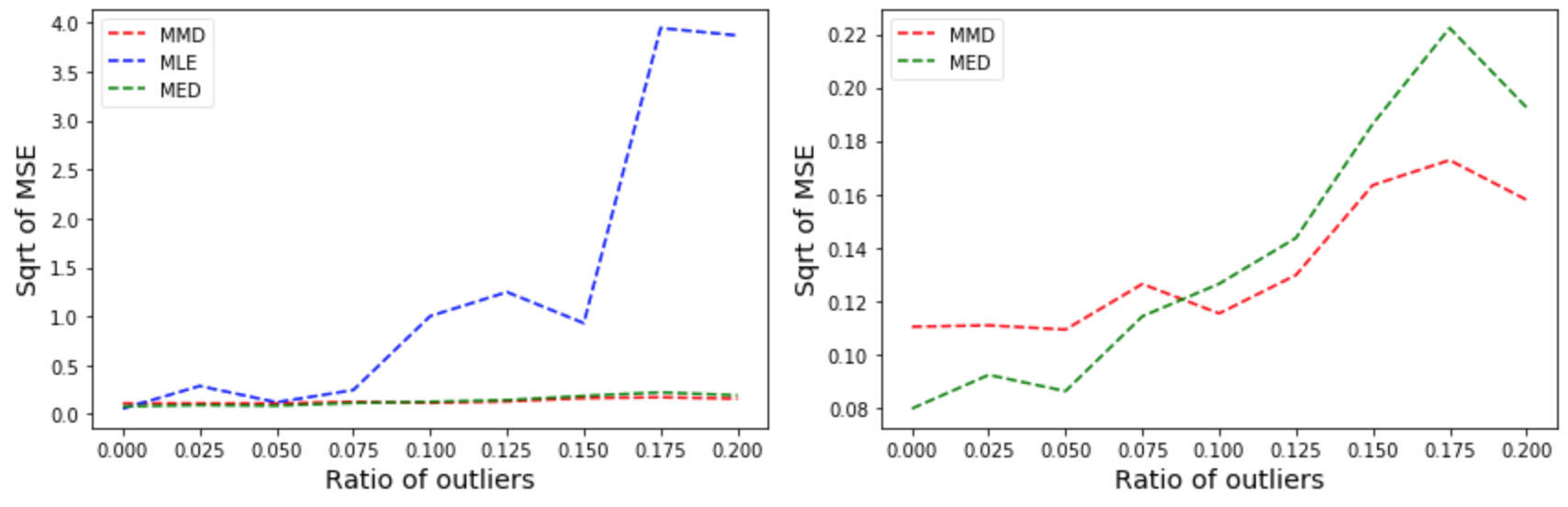}}
  \vspace{-0.5cm}
  {Figure 1 - Comparison of the square root of the MSE for the MMD estimator, the MLE and the median in the robust Gaussian mean estimation problem for various values of the proportion of outliers. The MMD estimator is the mean of the variational approximation.}
\end{figure}

%\centering
  %\includegraphics[scale=0.35]{Big_Figure_SVM_Toy.png}
  %\includegraphics[scale=0.35]{Big_Figure_SVM_Breast.png}

\vspace{-0.5cm}
\section{Conclusion}
\label{sec:conclusion}

In this paper, we showed that the MMD-Bayes posterior concentrates at the minimax convergence rate and is robust to model misspecification. We also proved that reasonable variational approximations of this posterior retain the same properties, and we proposed a stochastic gradient algorithm to compute such approximations that we supported with numerical simulations. An interesting future line of research would be to investigate if the i.i.d assumption can be relaxed and if the MMD-based estimator is also robust to dependency in the data.

\newpage

\bibliographystyle{apalike}

\newpage

\appendix

\section{Proof of Theorem \ref{theorem:mmd:mis}.}\label{apd:proof-thm-mis}

In order to prove Theorem \ref{theorem:mmd:mis}, we first need two preliminary lemmas. The first one ensures the convergence of the empirical measure $\hat{P}_n$ to the true distribution $P_0$ (in MMD distance $\mathbb{D}_k$) at the minimax rate $n^{-1/2}$, and which is an expectation variant of Lemma 1 in \cite{Briol2019} that holds with high probability:

\begin{lemma}
 \label{lemma:mmd:1}
 We have
 $$ \mathbb{E} \left[ \mathbb{D}_k^{\textcolor{black}{2}} \left( \hat{P}_n,P_0 \right) \right] \leq \textcolor{black}{\frac{1}{n}}. $$
\end{lemma}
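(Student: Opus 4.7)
The plan is to expand the squared MMD in the RKHS and exploit the fact that $\mu_{\hat{P}_n}$ is an empirical mean of i.i.d.\ elements of $\mathcal{H}_k$ whose population mean is exactly $\mu_{P_0}$. Concretely, since $\hat{P}_n = (1/n)\sum_i \delta_{X_i}$, the linearity of the kernel mean embedding gives
$$
\mu_{\hat{P}_n} - \mu_{P_0} = \frac{1}{n}\sum_{i=1}^n \bigl(k(X_i,\cdot) - \mu_{P_0}\bigr),
$$
and each summand $Z_i := k(X_i,\cdot) - \mu_{P_0}$ is a centered random element of $\mathcal{H}_k$ (since $\mathbb{E}_{X\sim P_0}[k(X,\cdot)] = \mu_{P_0}$ by definition).

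Next I would apply the standard Hilbert-space ``variance of a sum of independent centered variables'' identity. The $Z_i$ are i.i.d.\ and centered in $\mathcal{H}_k$, so cross terms vanish in expectation and
$$
\mathbb{E}\bigl[\mathbb{D}_k^2(\hat{P}_n,P_0)\bigr]
= \mathbb{E}\bigl\|\mu_{\hat{P}_n}-\mu_{P_0}\bigr\|_{\mathcal{H}_k}^2
= \frac{1}{n^2}\sum_{i=1}^n \mathbb{E}\|Z_i\|_{\mathcal{H}_k}^2
= \frac{1}{n}\,\mathbb{E}\|Z_1\|_{\mathcal{H}_k}^2.
$$

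It then remains to bound $\mathbb{E}\|Z_1\|_{\mathcal{H}_k}^2$ by $1$. Expanding via the reproducing property $\langle k(x,\cdot),k(y,\cdot)\rangle_{\mathcal{H}_k}=k(x,y)$ and using that $\mathbb{E}[k(X_1,\cdot)]=\mu_{P_0}$ pulls out to give $\mathbb{E}\langle k(X_1,\cdot),\mu_{P_0}\rangle = \|\mu_{P_0}\|_{\mathcal{H}_k}^2$, so
$$
\mathbb{E}\|Z_1\|_{\mathcal{H}_k}^2 = \mathbb{E}[k(X_1,X_1)] - \|\mu_{P_0}\|_{\mathcal{H}_k}^2 \leq \mathbb{E}[k(X_1,X_1)] \leq 1,
$$
where the first inequality uses $\|\mu_{P_0}\|^2 \geq 0$ and the second uses the assumed kernel bound (so $k(x,x)\le 1$). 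Combining gives $\mathbb{E}[\mathbb{D}_k^2(\hat{P}_n,P_0)]\le 1/n$.

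There is no real obstacle here: the proof is essentially the Hilbert-space analogue of the bias-variance decomposition of the empirical mean, and the only facts invoked beyond i.i.d.\ sampling are the reproducing property, the definition of $\mu_P$, and the boundedness $k\le 1$ stated in Section~\ref{sec:notations}. The only care needed is to justify interchanging expectation with the Hilbert inner product when computing the cross-term, which is standard for Bochner-integrable $\mathcal{H}_k$-valued random elements (and immediate given the uniform bound on $k$).
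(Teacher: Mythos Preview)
Your proof is correct and follows essentially the same approach as the paper: both expand $\mu_{\hat P_n}-\mu_{P_0}$ as $\tfrac{1}{n}\sum_i(k(X_i,\cdot)-\mu_{P_0})$, use independence and centering to kill the cross terms, and bound each diagonal term in expectation by $1$ via the kernel bound. Your justification of $\mathbb{E}\|Z_1\|_{\mathcal H_k}^2\le 1$ is in fact slightly more explicit than the paper's, which simply asserts the bound.
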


\begin{proof}
\begin{align*}
\mathbb{E} \left[ \mathbb{D}_k^2\left( \hat{P}_n,P_0 \right) \right] 
& = \mathbb{E} \left[  \left\| \frac{1}{n}\sum_{i=1}^{n}\left[ k(X_i,\cdot) - \mu_{P_0} \right] \right\|_{{\mathcal{H}_{k}}}^2 \right]
\\
& = \frac{1}{n^2}  \mathbb{E} \left[ \sum_{i=1}^n \left\| k(X_i,\cdot) - \mu_{P_0} \right\|_{{\mathcal{H}_{k}}}^2  + 2 \sum_{1\leq i<j \leq n} \left<k(X_i,\cdot) - \mu_{P_0},k(X_j,\cdot) -\mu_{P_0} \right>_{\mathcal{H}_k} \right]
\\
& \leq \frac{1}{n^2} \left( n  + 2 \sum_{1\leq i<j \leq n} 0 \right)
 = \frac{1}{n} \textcolor{black}{.}
\end{align*}
\end{proof}
\textcolor{black}{ The rate $n^{-1/2}$ is known to be minimax in this case, see Theorem 1 in~\cite{Tol2017}}.

The second lemma is a simple triangle-like inequality that will be widely used throughout the proofs of the paper:
\begin{lemma}
 \label{lemma:mmd:2}
 We have for any distributions $P$, $P'$ and $Q$:
 $$  \mathbb{D}_k^{\textcolor{black}{2}} \left( P,P' \right) \leq 2  \mathbb{D}_k^{\textcolor{black}{2}} \left( P,Q \right) + 2  \mathbb{D}_k^{\textcolor{black}{2}} \left( Q,P' \right)  . $$
\end{lemma}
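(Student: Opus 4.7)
The plan is to deduce this from the fact that $\mathbb{D}_k$ is a genuine metric on probability measures, which was established in Section~\ref{sec:notations}: namely, $\mathbb{D}_k(P,Q) = \|\mu_P - \mu_Q\|_{\mathcal{H}_k}$ is a Hilbert-space norm of the difference of the kernel mean embeddings. Norms satisfy the triangle inequality, so the first step will be to write
\begin{equation*}
\mathbb{D}_k(P,P') = \|\mu_P - \mu_{P'}\|_{\mathcal{H}_k} \leq \|\mu_P - \mu_Q\|_{\mathcal{H}_k} + \|\mu_Q - \mu_{P'}\|_{\mathcal{H}_k} = \mathbb{D}_k(P,Q) + \mathbb{D}_k(Q,P').
\end{equation*}

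The second step is to square both sides and apply the elementary numerical inequality $(a+b)^2 \leq 2a^2 + 2b^2$, which itself is just a rewriting of $(a-b)^2 \geq 0$. This yields the claimed bound immediately, with the constant $2$. There is essentially no obstacle here: the inequality is a one-line consequence of the Hilbert-space structure of $\mathcal{H}_k$ together with a standard AM--QM-type inequality, and both ingredients have already been recorded in the paper. The only thing to double-check is that one does not need any integrability assumption on $P,P',Q$ beyond what is guaranteed by the boundedness of the kernel $k$ (by $1$), which ensures that $\mu_P,\mu_{P'},\mu_Q$ all lie in $\mathcal{H}_k$ so that the triangle inequality genuinely applies.
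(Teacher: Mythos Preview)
Your proposal is correct and is essentially identical to the paper's own proof: the paper also applies the triangle inequality for $\mathbb{D}_k$, squares, and then uses $2ab\leq a^2+b^2$ (equivalently, $(a+b)^2\leq 2a^2+2b^2$) to obtain the factor $2$. There is no substantive difference between the two arguments.
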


\begin{proof}
The chain of inequalities follow directly from the triangle inequality and inequality $2ab\leq a^2+b^2$.
\begin{align*}
\mathbb{D}_k^2\left( P,P' \right) 
& \leq \bigg( \mathbb{D}_k \left( P,Q \right) + \mathbb{D}_k \left( Q,P' \right) \bigg)^2
\\
& =  \mathbb{D}_k^{\textcolor{black}{2}} \left( P,Q \right) + \mathbb{D}_k^{\textcolor{black}{2}} \left( Q,P' \right) + 2 \mathbb{D}_k \left( P,Q \right) \mathbb{D}_k \left( Q,P' \right)
\\
& \leq  \mathbb{D}_k^{\textcolor{black}{2}} \left( P,Q \right) + \mathbb{D}_k^{\textcolor{black}{2}} \left( Q,P' \right) + \mathbb{D}_k^{\textcolor{black}{2}} \left( P,Q \right) + \mathbb{D}_k^{\textcolor{black}{2}} \left( Q,P' \right)
\\
& = 2  \mathbb{D}_k^{\textcolor{black}{2}} \left( P,Q \right) + 2  \mathbb{D}_k^{\textcolor{black}{2}} \left( Q,P' \right) .
\end{align*}
\end{proof}

Let us come back to the proof of Theorem \ref{theorem:mmd:mis}. An important point is that the MMD-Bayes can also be defined using an argmin over the set $\mathcal{M}^1_+(\Theta)$ of all probability distributions absolutely continuous with respect to $\pi$ and the Kullback-Leibler divergence KL$(\cdot\|\cdot)$:
\[
\pi_n^\beta = \underset{\rho \in \mathcal{M}^1_+(\Theta)}{\arg\min} \, \bigg\{
 \int \mathbb{D}_k^{\textcolor{black}{2}}(P_{\theta},\hat{P}_n) \rho({\rm d}\theta) + \frac{ \textnormal{KL}(\rho\|\pi) }{ \beta }\bigg\}.
\]
\textcolor{black}{This is an immediate consequence of Donsker and Varadhan's variational inequality, see e.g ~\cite{MR2483528}}. Using the triangle inequality, Lemma~\ref{lemma:mmd:1}, Lemma~\ref{lemma:mmd:2} for different settings of $P$, $P'$ and $Q$, and Jensen's inequality:
\begin{align*}
\mathbb{E}\left[ \int \mathbb{D}_k^{\textcolor{black}{2}}\left( P_{\theta}  ,P_0 \right) \pi^{\beta}_n({\rm d}\theta) \right] & \leq \textcolor{black}{2} \mathbb{E}\left[ \int \mathbb{D}_k^{\textcolor{black}{2}}\left( P_{\theta}  ,\hat{P}_n \right) \pi^{\beta}_n({\rm d}\theta) \right] + 2 \mathbb{E} \left[ \mathbb{D}_k^{\textcolor{black}{2}} \left( \hat{P}_n,P_0 \right) \right]
\\
& \leq \textcolor{black}{2} \mathbb{E}\left[ \int \mathbb{D}_k^{\textcolor{black}{2}}\left( P_{\theta}  ,\hat{P}_n \right) \pi^{\beta}_n({\rm d}\theta) \right] + \textcolor{black}{\frac{2}{n}}
\\
& \leq \textcolor{black}{2} \mathbb{E}\left[  \int \mathbb{D}_k^{\textcolor{black}{2}}\left( P_{\theta}  ,\hat{P}_n \right) \pi^{\beta}_n({\rm d}\theta) + \frac{ \textnormal{KL}(\pi^\beta_n\|\pi) }{ \beta } \right] + \textcolor{black}{ \frac{2}{n} }
\\
& = \textcolor{black}{2} \mathbb{E}\left[  \inf_\rho \left\{  \int \mathbb{D}_k^{\textcolor{black}{2}}\left( P_{\theta}  ,\hat{P}_n \right) \rho({\rm d}\theta) + \frac{ \textnormal{KL}(\rho\|\pi) }{ \beta }  \right\} \right]+ \textcolor{black}{\frac{2}{n} }
\\
& \leq  \textcolor{black}{2} \inf_\rho  \mathbb{E}\left[  \int \mathbb{D}_k^{\textcolor{black}{2}}\left( P_{\theta}  ,\hat{P}_n \right) \rho({\rm d}\theta) + \frac{ \textnormal{KL}(\rho\|\pi) }{ \beta } \right] + \textcolor{black}{ \frac{2}{n}} ,
\end{align*}
which gives, using Lemma~\ref{lemma:mmd:1} and the triangle inequality again:
\begin{align*}
\mathbb{E}\left[ \int \mathbb{D}^{\textcolor{black}{2}}_k\left( P_{\theta}  ,P_0 \right) \pi^{\beta}_n({\rm d}\theta) \right] & \leq \textcolor{black}{2} \inf_\rho  \mathbb{E}\left[  \int \mathbb{D}^{\textcolor{black}{2}}_k\left( P_{\theta}  ,\hat{P}_n \right) \rho({\rm d}\theta) + \frac{ \textnormal{KL}(\rho\|\pi) }{ \beta } \right] + \textcolor{black}{ \frac{2}{n} }
\\
 & \leq \textcolor{black}{2} \inf_\rho  \mathbb{E}\left[  \int \mathbb{D}^{\textcolor{black}{2}}_k\left( P_{\theta}  ,P_0 \right) \rho({\rm d}\theta) + \frac{ \textnormal{KL}(\rho\|\pi) }{ \beta } \right] + 4 \mathbb{E} \left[ \mathbb{D}_k^{\textcolor{black}{2}} \left( \hat{P}_n,P_0 \right) \right] + \textcolor{black}{ \frac{2}{n} }
\\
& = \textcolor{black}{2} \inf_\rho \mathbb{E} \left[ \textcolor{black}{2} \int \mathbb{D}^{\textcolor{black}{2}}_k\left( P_{\theta}  ,P_0 \right) \rho({\rm d}\theta) + \frac{ \textnormal{KL}(\rho\|\pi) }{ \beta } \right]+ \textcolor{black}{\frac{6}{n}}
\\
& \leq \textcolor{black}{8} \mathbb{D}_k^{\textcolor{black}{2}}\left( P_{\theta^*}, P_0 \right) + \textcolor{black}{2} \inf_\rho \mathbb{E} \left[ \textcolor{black}{4} \int \mathbb{D}_k^{\textcolor{black}{2}}\left( P_{\theta}  ,P_{\theta^*} \right) \rho({\rm d}\theta) + \frac{ \textnormal{KL}(\rho\|\pi) }{ \beta } \right]+ \textcolor{black}{\frac{6}{n}} 
\end{align*}
We remind that $\theta^*=\argmin_{\theta \in \Theta} \mathbb{D}_k\left(P_{\theta},P_0\right)$.

This bound can be formulated in the following way when $\rho$ is chosen to be equal to $\pi$ restricted to $\mathcal{B}_n$ :
$$ \mathbb{E}\left[ \int \mathbb{D}_k^{\textcolor{black}{2}}\left( P_{\theta}  ,P_0 \right) \pi^{\beta}_n({\rm d}\theta) \right]  \leq \textcolor{black}{8} \inf_{\theta\in\Theta}  \mathbb{D}_k^{\textcolor{black}{2}}\left(P_{\theta},P_0 \right)  + \textcolor{black}{\frac{8}{n}} + \textcolor{black}{2}\frac{-\log \pi (B)}{\beta} + \textcolor{black}{\frac{6}{n}} . $$
Finally, as soon as the prior mass condition $C(\pi,\beta)$ is satisfied, we get:
$$ \mathbb{E}\left[ \int \mathbb{D}_k^{\textcolor{black}{2}}\left( P_{\theta}  ,P_0 \right) \pi^{\beta}_n({\rm d}\theta) \right]   \leq \textcolor{black}{8} \inf_{\theta\in\Theta}  \mathbb{D}_k^{\textcolor{black}{2}} \left(P_{\theta},P_0 \right)  +  \textcolor{black}{\frac{16}{n}} . $$

\section{Proof of Theorem \ref{theorem:mmd:well}.}\label{apd:proof-thm-well}

In case of well-specification, Formula \eqref{formula-well} simply becomes according to Jensen's inequality:
$$ \mathbb{E}\left[ \int \mathbb{D}_k\left( P_{\theta}  ,P_0 \right) \pi^{\beta}_n({\rm d}\theta) \right] \leq \sqrt{\mathbb{E}\left[ \int \mathbb{D}_k^2\left( P_{\theta}  ,P_0 \right) \pi^{\beta}_n({\rm d}\theta) \right]}  \leq \sqrt{\frac{16}{n}} = \frac{4}{\sqrt{n}} . $$
Hence, it is sufficient to show that the inequality above implies the concentration of the MMD-Bayes to the true distribution. This is a simple consequence of Markov's inequality. Indeed, for any $M_n \rightarrow +\infty$:
$$
\mathbb{E} \bigg[ \pi_n^\beta \bigg(\mathbb{D}_k( P_{\theta}, P_0 )>M_n \cdot n^{-1/2} \bigg) \bigg] \leq \frac{\mathbb{E} \bigg[ \int \mathbb{D}_k( P_{\theta}, P_0 ) \pi_n^\beta(d\theta) \bigg]}{M_n \cdot n^{-1/2} } \leq \frac{4 n^{-1/2} }{M_n \cdot n^{-1/2} } \xrightarrow[n\rightarrow +\infty]{} 0,
$$
which guarantees the convergence in mean of $\pi_n^\beta \big(\mathbb{D}_k( P_{\theta}, P_0 )>M_n \cdot n^{-1/2} \big)$ to $0$, which leads to the convergence in probability of $\pi_n^\beta \big(\mathbb{D}_k( P_{\theta}, P_0 )>M_n \cdot n^{-1/2} \big) $ to $0$, i.e. the concentration of MMD-Bayes to $P_0$ at rate $n^{-1/2}$.

\section{Proof of theorem \ref{theorem:vi:mis}.}\label{apd:proof-thm-vi}

Formula \eqref{formula-well-vi} can be proven easily as for the proof of Theorem \ref{theorem:mmd:mis}. Indeed, we use the expression of the variational approximation of the MMD-Bayes using an argmin over the set $\mathcal{F}$:
\[
\tilde{\pi}_n^\beta = \underset{\rho \in \mathcal{F}}{\arg\min} \, \bigg\{
 \int \mathbb{D}_k^{\textcolor{black}{2}}(P_{\theta},\hat{P}_n) \rho({\rm d}\theta) + \frac{ \textnormal{KL}(\rho\|\pi) }{ \beta }\bigg\}.
\]
\textcolor{black}{
This is yet an application of Donsker and Varadhan's lemma.} Then, as previously:
\begin{align*}
\mathbb{E}\left[ \int \mathbb{D}_k^{\textcolor{black}{2}}\left( P_{\theta}  ,P_0 \right) \tilde{\pi}^{\beta}_n({\rm d}\theta) \right] & \leq \mathbb{E}\left[ \int \mathbb{D}_k^{\textcolor{black}{2}}\left( P_{\theta}  ,\hat{P}_n \right) \tilde{\pi}^{\beta}_n({\rm d}\theta) \right] +\textcolor{black}{\frac{2}{n}}\text{ by Lemma~\ref{lemma:mmd:1}} 
\\
& \leq \textcolor{black}{2} \mathbb{E}\left[  \int \mathbb{D}^{\textcolor{black}{2}}_k\left( P_{\theta}  ,\hat{P}_n \right) \tilde{\pi}^{\beta}_n({\rm d}\theta) + \frac{ \textnormal{KL}(\pi^\beta_n\|\pi) }{ \beta } \right] + \textcolor{black}{\frac{2}{n}}
\\
& = \textcolor{black}{2} \mathbb{E}\left[  \inf_\rho \left\{  \int \mathbb{D}^{\textcolor{black}{2}}_k\left( P_{\theta}  ,\hat{P}_n \right) \rho({\rm d}\theta) + \frac{ \textnormal{KL}(\rho\|\pi) }{ \beta }  \right\} \right]+\textcolor{black}{\frac{2}{n}} 
\\
& \leq \textcolor{black}{2} \inf_\rho  \mathbb{E}\left[  \int \mathbb{D}^{\textcolor{black}{2}}_k\left( P_{\theta}  ,\hat{P}_n \right) \rho({\rm d}\theta) + \frac{ \textnormal{KL}(\rho\|\pi) }{ \beta } \right] + \textcolor{black}{\frac{2}{n}}
\\
& \leq \textcolor{black}{2} \inf_\rho \mathbb{E} \left[ \textcolor{black}{2} \int \mathbb{D}^{\textcolor{black}{2}}_k\left( P_{\theta}  ,P_0 \right) \rho({\rm d}\theta) + \frac{ \textnormal{KL}(\rho\|\pi) }{ \beta } \right]+ \textcolor{black}{\frac{6}{n}}
\\
& \leq \textcolor{black}{8} \mathbb{D}_k^{\textcolor{black}{2}} \left( P_{\theta^*}, P_0 \right) + \textcolor{black}{2} \inf_\rho \mathbb{E} \left[ \textcolor{black}{4} \int \mathbb{D}_k^{\textcolor{black}{2}} \left( P_{\theta}  ,P_{\theta^*} \right) \rho({\rm d}\theta) + \frac{ \textnormal{KL}(\rho\|\pi) }{ \beta } \right]+ \textcolor{black}{\frac{6}{n}} .
\end{align*}

Hence, under the extended prior mass condition \eqref{extendedpriormass}, we have directly:
$$ \mathbb{E}\left[ \int \mathbb{D}_k^{\textcolor{black}{2}}\left( P_{\theta}  ,P_0 \right) \tilde{\pi}^{\beta}_n({\rm d}\theta) \right]   \leq \textcolor{black}{8}\inf_{\theta\in\Theta}  \mathbb{D}_k^{\textcolor{black}{2}}\left(P_{\theta},P_0 \right)  + \textcolor{black}{\frac{16}{n}} . $$

The proof of Formula \eqref{concentration-well-vi} follows the lines of the proof of Theorem \ref{theorem:mmd:well}.

\section{An example of robustness of the MMD distance.}\label{apd:MMD-is-robust}

In this appendix, we try to give some intuition on the choice of MMD-Bayes rather than the classical regular Bayesian distribution. To do so, we show a simple misspecified example for which the MMD distance is more suited than the classical Kullback-Leibler (KL) divergence used in the Bayes rule in the definition of the classical Bayesian posterior.

We consider the Huber's contamination model described as follows. We observe a collection of random variables $X_1,...,X_n$. There are unobserved i.i.d random variables $Z_1,...,Z_n \sim \text{Ber}(\epsilon)$ and a distribution $Q$, such that the distribution of $X_i$ given $Z_i=0$ is a Gaussian $\mathcal{N}(\theta^0,\sigma^2)$ where the distribution of $X_i$ given $Z_i=0$ is $Q$. The observations $X_i$'s are independent. This is equivalent to considering a true distribution $P_0=(1-\epsilon)\mathcal{N}(\theta^0,\sigma^2)+\epsilon Q$. Here, $\epsilon\in(0,1/2)$ is the contamination rate, $\sigma^2$ is a known variance and $Q$ is the contamination distribution that is taken here as $\mathcal{N}(\theta_c,\sigma^2)$, where $\theta_c$ is the mean of the corrupted observations. The true parameter of interest is $\theta^0$ and the model is composed Gaussian distributions $\{P_\theta=\mathcal{N}(\theta,\sigma^2)/\theta \in \mathbb{R}^d\}$. The goal in this appendix is to show that we exactly recover the true parameter $\theta^0$ with the minimizer of the MMD distance to the true distribution $P_0$, whereas it is not the case with the KL divergence. We use a Gaussian kernel $k(x,y) = \exp(-\|x-y\|^2/\gamma^2)$.

\subsection*{Computation of the MMD distance to the true distribution:}

We have remind that $P_\theta = \mathcal{N}(\theta,\sigma^2 I_d)$ where $\theta\in\Theta=\mathbb{R}^d$. For independent $X$ and $Y$ following respectively $P_\theta$ and $P_{\theta'}$, we get $(X-Y)\sim \mathcal{N}(\theta-\theta',\sigma^2 I_d)$. Hence,
$$ \frac{X-Y}{\sqrt{2\sigma^2}} \sim \mathcal{N}\left( \frac{\theta-\theta'}{\sqrt{2\sigma^2 }},I_d \right) $$
and the square of this random variable is a noncentral chi-square random variable:
$$ \frac{\| X-Y \|^2}{2\sigma^2} \sim \chi^2\left( d,  \frac{\| \theta-\theta' \|^2}{2\sigma^2} \right). $$
It is known that for $U\sim \chi^2(d,m)$, we have $ \mathbb{E}[\exp(tU)] = \exp(mt/(1-2t)) /(1-2t)^{d/2} $, and then $t=-(2\sigma^2)/\gamma^2$ gives:
%First, simple derivations show that for any $\theta,\theta'$:
\begin{align*}
\langle \mu_{P_\theta},\mu_{P_{\theta'}}\rangle_{\mathcal{H}_k} & = \mathbb{E}_{X\sim P_\theta,Y\sim P_{\theta'}}\left[\exp\left(-\frac{\|X-Y\|^2}{\gamma^2}\right)\right] \\
& = \left(\frac{\gamma^2}{4\sigma^2 + \gamma^2}\right)^{\frac{d}{2}} \exp\left( -\frac{\|\theta-\theta'\|^2}{4\sigma^2 + \gamma^2} \right) .
\end{align*}
% Formula MMD
%\begin{equation*}
%\mathbb{D}_k^2(P_\theta,P_{\theta'}) 
%= 2 \left(\frac{\gamma^2}{4\sigma^2 + \gamma^2}\right)^{\frac{d}{2}} \left[ 1 - \exp\left( -\frac{\|\theta-\theta'\|^2}{4\sigma^2 + \gamma^2} \right) \right].
%\end{equation*}
Thus,
$$
\langle \mu_{P_\theta},\mu_{P_\theta}\rangle_{\mathcal{H}_k} =  \left(\frac{\gamma^2}{4\sigma^2+\gamma^2}\right)^{\frac{d}{2}} ,
$$
\begin{align*}
\langle\mu_{P_\theta},\mu_{P_0}\rangle_{\mathcal{H}_k} & = (1-\epsilon) \langle\mu_{P_\theta},\mu_{P_{\theta^0}}\rangle_{\mathcal{H}_k} + \epsilon \langle\mu_{P_\theta},\mu_{P_{\theta_c}}\rangle_{\mathcal{H}_k} \\
& = (1-\epsilon) \left(\frac{\gamma^2}{4\sigma^2 + \gamma^2}\right)^{\frac{d}{2}} \exp\left( -\frac{\|\theta-\theta^0\|^2}{4\sigma^2 + \gamma^2} \right) \\
& \quad \quad \quad \quad \quad \quad \quad \quad \quad \quad \quad \quad + \epsilon \left(\frac{\gamma^2}{4\sigma^2 + \gamma^2}\right)^{\frac{d}{2}} \exp\left( -\frac{\|\theta-\theta_c\|^2}{4\sigma^2 + \gamma^2} \right) ,
\end{align*}
and
\begin{align*}
\langle\mu_{P_0},\mu_{P_0}\rangle_{\mathcal{H}_k} & = (1-\epsilon)^2 \langle\mu_{P_{\theta^0}},\mu_{P_{\theta^0}}\rangle_{\mathcal{H}_k} + 2 \epsilon(1-\epsilon) \langle\mu_{P_{\theta^0}},\mu_{P_{\theta_c}}\rangle_{\mathcal{H}_k} + \epsilon^2 \langle\mu_{P_{\theta_c}},\mu_{P_{\theta_c}}\rangle_{\mathcal{H}_k} \\
& = (1-\epsilon)^2  \left(\frac{\gamma^2}{4\sigma^2 + \gamma^2}\right)^{\frac{d}{2}} + \epsilon^2  \left(\frac{\gamma^2}{4\sigma^2 + \gamma^2}\right)^{\frac{d}{2}} \\
& \quad \quad \quad \quad \quad \quad \quad \quad \quad \quad \quad +  2 \epsilon(1-\epsilon)  \left(\frac{\gamma^2}{4\sigma^2 + \gamma^2}\right)^{\frac{d}{2}} \exp\left( -\frac{\|\theta^0-\theta_c\|^2}{4\sigma^2 + \gamma^2} \right) \\
& = \bigg(1 -2 \epsilon(1-\epsilon)\bigg)  \left(\frac{\gamma^2}{4\sigma^2 + \gamma^2}\right)^{\frac{d}{2}} \\
& \quad \quad \quad \quad \quad \quad \quad \quad \quad \quad \quad  + 2 \epsilon(1-\epsilon) \left(\frac{\gamma^2}{4\sigma^2 + \gamma^2}\right)^{\frac{d}{2}} \exp\left( -\frac{\|\theta^0-\theta_c\|^2}{4\sigma^2 + \gamma^2} \right).
\end{align*}

Hence
\begin{align*}
\mathbb{D}_k^2\left(P_0,P_{\theta} \right) & = \|\mu_{P_\theta}-\mu_{P_0}\|_{\mathcal{H}_k}^2 = \langle\mu_{P_\theta},\mu_{P_\theta}\rangle_{\mathcal{H}_k} -2  \langle\mu_{P_\theta},\mu_{P_0}\rangle_{\mathcal{H}_k} +  \langle \mu_{P_0},\mu_{P_0}\rangle_{\mathcal{H}_k} \\
& = 2\bigg(1 - \epsilon(1-\epsilon)\bigg)  \left(\frac{\gamma^2}{4\sigma^2 + \gamma^2}\right)^{\frac{d}{2}} - 2 \epsilon \left(\frac{\gamma^2}{4\sigma^2 + \gamma^2}\right)^{\frac{d}{2}} \exp\left( -\frac{\|\theta-\theta_c\|^2}{4\sigma^2 + \gamma^2} \right) \\
& \quad \quad \quad \quad  \quad \quad \quad \quad  + 2 \epsilon(1-\epsilon) \left(\frac{\gamma^2}{4\sigma^2 + \gamma^2}\right)^{\frac{d}{2}} \exp\left( -\frac{\|\theta^0-\theta_c\|^2}{4\sigma^2 + \gamma^2} \right) \\
& \quad \quad \quad \quad \quad \quad \quad \quad \quad \quad \quad - 2 (1-\epsilon) \left(\frac{\gamma^2}{4\sigma^2 + \gamma^2}\right)^{\frac{d}{2}} \exp\left( -\frac{\|\theta-\theta^0\|^2}{4\sigma^2 + \gamma^2} \right) \\
& = 2(1-\epsilon) \left(\frac{\gamma^2}{4\sigma^2 + \gamma^2}\right)^{\frac{d}{2}} \bigg[ 1 - \exp\left( -\frac{\|\theta-\theta^0\|^2}{4\sigma^2 + \gamma^2} \right) \bigg] \\
& \quad \quad \quad \quad \quad \quad \quad \quad + 2 \epsilon \left(\frac{\gamma^2}{4\sigma^2 + \gamma^2}\right)^{\frac{d}{2}} \bigg[ 1 - \exp\left( -\frac{\|\theta-\theta_c\|^2}{4\sigma^2 + \gamma^2} \right) \bigg] \\
& \quad \quad \quad \quad \quad \quad \quad \quad \quad \quad \quad - 2 \epsilon(1-\epsilon) \left(\frac{\gamma^2}{4\sigma^2 + \gamma^2}\right)^{\frac{d}{2}} \bigg[ 1 - \exp\left( -\frac{\|\theta^0-\theta_c\|^2}{4\sigma^2 + \gamma^2} \right) \bigg]  .
\end{align*}

Hence, the minimizer of $\mathbb{D}_k\left(P_0,P_{\theta} \right)$ w.r.t $\theta$, i.e the maximizer of:
$$
(1-\epsilon) \exp\left(-\frac{\|\theta -\theta^0\|^2}{4\sigma^2+\gamma^2}\right) + \epsilon \exp\left(-\frac{\|\theta-\theta_c\|^2}{4\sigma^2+\gamma^2}\right) .
$$
is $\theta^0$ itself as $\epsilon\leq1/2$.

\subsection*{Computation of the KL divergence to the true distribution:}

In this case, easy computations lead for any $\theta$ to:
\begin{align*}
\textnormal{KL}(P_0\|P_\theta) & = \textnormal{KL}\bigg( (1-\epsilon)\mathcal{N}(\theta^0,\sigma^2)+\epsilon \mathcal{N}(\theta_c,\sigma^2) \| \mathcal{N}(\theta,\sigma^2) \bigg) \\
& = C + (1-\epsilon) H(\theta^0\|\theta) + \epsilon H(\theta_c\|\theta) \\
& = C + \frac{d\log(2\pi\sigma^2)}{2} + \frac{d\sigma^2}{2} + (1-\epsilon) \frac{\|\theta-\theta^0\|^2}{2\sigma^2} + \epsilon \frac{\|\theta-\theta_c\|^2}{2\sigma^2} ,
\end{align*}
where
\begin{align*}
H(\theta'\|\theta) & = - \int\log\big(\mathcal{N}(x|\theta,\sigma^2)\big) \mathcal{N}(x|\theta',\sigma^2) dx \\
& = \frac{d\log(2\pi\sigma^2)}{2} + \frac{d\sigma^2}{2} + \frac{\|\theta-\theta'\|^2}{2\sigma^2}
\end{align*}
is the cross-entropy of $P_\theta$ and $P_{\theta'}$, and 
\begin{align*}
C & = (1-\epsilon)\int\log\bigg((1-\epsilon)\mathcal{N}(x|\theta^0,\sigma^2)+\epsilon \mathcal{N}(x|\theta_c,\sigma^2)\bigg) \mathcal{N}(x|\theta^0,\sigma^2) dx \\
& \quad \quad \quad + \epsilon \int\log\bigg((1-\epsilon)\mathcal{N}(x|\theta^0,\sigma^2)+\epsilon \mathcal{N}(x|\theta_c,\sigma^2)\bigg) \mathcal{N}(x|\theta_c,\sigma^2) dx,
\end{align*}
where $\mathcal{N}(x|m,\sigma^2)$ is the probability density function of $\mathcal{N}(m,\sigma^2)$ evaluated at $x$.

Hence, the minimizer of $\textnormal{KL}\left(P_0\|P_{\theta} \right)$ w.r.t $\theta$, i.e the minimizer of:
$$
(1-\epsilon) \|\theta-\theta^0\|^2 + \epsilon \|\theta-\theta_c\|^2 .
$$
is $(1-\epsilon)\theta^0+\epsilon\theta_c$, which can be far away from $\theta^0$ in situations when the corrupted mean $\theta_c$ is very far from the true parameter $\theta^0$.

\section{An example of computation of a robust prior mass.}\label{apd:priorcomputation}

In this appendix, we tackle the computation of a prior mass in the Gaussian mean estimation problem, and we show that it leads to a wide range of values of $\beta$ satisfying the prior mass condition $C(\pi,\beta)$ for a standard normal prior $\pi$. 

We recall that the prior mass condition $C(\pi,\beta)$ is satisfied as soon as there exists a function $f$ such that:
$$
\beta \geq - \log \pi(\mathcal{B}_n) \textcolor{black}{n} .
$$
In practice, lower bounds of the form $\pi(\mathcal{B}_n)\geq \mathcal{L}e^{-f(\theta^*)}$ naturally appear when computing the prior mass $\pi(\mathcal{B}_n)$. Only $f(\theta^*)$ depends on the parameter $\theta^*$ corresponding to the best approximation in the model of the true distribution in the MMD sense, that is the true parameter itself when the model is well-specified. Hence, it is sufficient to choose a value of the temperature parameter $\beta \geq \big( f(\theta^*) - \log \mathcal{L}\big)\textcolor{black}{n} $ in order to obtain the prior mass condition.

We conduct the computation in a misspecified case, where we assume that a proportion $1-\epsilon$ of the observations are sampled i.i.d from a $\sigma^2$-variate Gaussian distribution of interest $P_{\theta^0}$, but that the remaining observations are corrupted and can take any arbitrary value. We consider the model of Gaussian distributions $\{P_\theta=\mathcal{N}(\theta,\sigma^2)/\theta\in\mathbb{R}^d\}$. This adversarial contamination model is more general than Huber's contamination model presented in Appendix \ref{apd:MMD-is-robust}. Note that when $\epsilon=0$, then the model is well-specified and the distribution of interest $P_{\theta^0}$ is also the true distribution $P_0$. We use the Gaussian kernel $k(x,y) = \exp(-\|x-y\|^2/\gamma^2)$ and the standard normal prior $\pi = \mathcal{N}(0,I_d)$.

We write the inequality defining parameters $\theta$ belonging to $\mathcal{B}_n$:
\begin{equation}
\label{inequalityB}
\mathbb{D}^2_k\left(P_{\theta^*},P_{\theta} \right) \leq n^{-1} .
\end{equation}
Note that when the model is well-specified, the we get $\theta^*=\theta^0$.

According to derivations performed in Appendix \ref{apd:MMD-is-robust}, we have for any $\theta$:
\begin{align*}
\mathbb{D}_k^2\left(P_{\theta},P_{\theta^*} \right) & = \langle\mu_{P_\theta},\mu_{P_\theta}\rangle_{\mathcal{H}_k} -2  \langle\mu_{P_\theta},\mu_{P_{\theta^*}}\rangle_{\mathcal{H}_k} +  \langle \mu_{P_{\theta^*}},\mu_{P_{\theta^*}}\rangle_{\mathcal{H}_k} \\
& = 2 \left(\frac{\gamma^2}{4\sigma^2 + \gamma^2}\right)^{\frac{d}{2}} \left[ 1 - \exp\left( -\frac{\|\theta-\theta^*\|^2}{4\sigma^2 + \gamma^2} \right) \right] .
\end{align*}
Hence, Inequality \eqref{inequalityB} is equivalent to:
$$
2 \left(\frac{\gamma^2}{4\sigma^2 + \gamma^2}\right)^{\frac{d}{2}} \left[ 1 - \exp\left( -\frac{\|\theta-\theta^*\|^2}{4\sigma^2 + \gamma^2} \right) \right] \leq \frac{1}{n}
$$
i.e to
$$
1-\frac{1}{2n}\left(1+\frac{4\sigma^2}{\gamma^2}\right)^{\frac{d}{2}} \leq \exp\left(-\frac{\|\theta-\theta^*\|^2}{4\sigma^2 + \gamma^2}\right)
$$
We denote $s_n=\sqrt{\frac{4\sigma^2 + \gamma^2}{2n}} \left(1+\frac{4\sigma^2}{\gamma^2}\right)^{\frac{d}{4}}$ and $\mathbb{B}(\theta,s_n)$ the ball of radius $s_n$ and centered at $\theta$. Let us compute the prior mass of $\mathcal{B}_n$:
\begin{align*}
\pi(\mathcal{B}_n) & = \pi\bigg(1-\frac{1}{2n}\left(1+\frac{4\sigma^2}{\gamma^2}\right)^{\frac{d}{2}} \leq \exp\left(-\frac{\|\theta-\theta^*\|^2}{4\sigma^2 + \gamma^2}\right)\bigg) \\
& \geq \pi\bigg(1-\frac{1}{2n}\left(1+\frac{4\sigma^2}{\gamma^2}\right)^{\frac{d}{2}} \leq 1 -\frac{\|\theta-\theta^*\|^2}{4\sigma^2 + \gamma^2}\bigg)  \quad \quad \textnormal{using inequality $e^{-x}\geq1-x$} \\
& = \pi\bigg(\|\theta-\theta^*\|^2 \leq (4\sigma^2 + \gamma^2) \frac{1}{2n} \left(1+\frac{4\sigma^2}{\gamma^2}\right)^{\frac{d}{2}} \bigg) \\
& = \pi\big(\theta\in\mathbb{B}(\theta^*,s_n)\big) \\
& = \int_{\mathbb{B}(\theta^*,s_n)} (2\pi)^{-d/2} e^{{-\|\theta\|^2}/{2}} d\theta .
\end{align*}
Actually, the point that minimizes $\theta \rightarrow e^{{-\|\theta\|^2}/{2}}$ on $\mathbb{B}(\theta^*,s_n)$ is $\theta^* (1+s_n/\|\theta^*\|)$. Thus:
\begin{align*}
\pi(\mathcal{B}_n) & \geq \int_{\mathbb{B}(\theta^*,s_n)} (2\pi)^{-d/2} \exp\bigg(\frac{-\|\theta\|^2}{2}\bigg) d\theta \\
& \geq (2\pi)^{-d/2} \exp\bigg(\frac{-(\|\theta^*\|+s_n)^2}{2}\bigg) {vol}\big(\mathbb{B}(\theta^*,s_n)\big).
\end{align*}
We recall the formula of the volume of the d-dimensional ball:
$$
{vol}\big(\mathbb{B}(\theta^*,s_n)\big)=\frac{\pi^{d/2}}{\Gamma(d/2+1)} s_n^d .
$$ 
Hence:
$$
\pi(\mathcal{B}_n) \geq \frac{\big(4\sigma^2 + \gamma^2\big)^{\frac{d}{2}} \left(1+\frac{4\sigma^2}{\gamma^2}\right)^{\frac{d^2}{4}}}{\Gamma(d/2+1)} \exp\bigg(-\frac{1}{2}\bigg\{\|\theta^*\|+\sqrt{\frac{4\sigma^2 + \gamma^2}{2n}} \left(1+\frac{4\sigma^2}{\gamma^2}\right)^{\frac{d}{4}}\bigg\}^2\bigg) \frac{1}{n^{d/2}} .
$$
As could be expected for a standard normal prior, the larger the value of $\|\theta^*\|$, the smaller can be the prior mass.

We denote 
$$
\mathcal{L} = \frac{\big(4\sigma^2 + \gamma^2\big)^{\frac{d}{2}} \left(1+\frac{4\sigma^2}{\gamma^2}\right)^{\frac{d^2}{4}}}{\Gamma(d/2+1)} \cdot \frac{1}{n^{d/2}} 
$$
and
$$
f(x)=\frac{1}{2}\bigg\{\|x\|+\sqrt{\frac{4\sigma^2 + \gamma^2}{2n}} \left(1+\frac{4\sigma^2}{\gamma^2}\right)^{\frac{d}{4}}\bigg\}^2 
$$
so that $\pi(\mathcal{B}_n) \geq \mathcal{L}e^{-f(\theta^*)}$.

Hence, for the standard normal prior $\pi$, values of $\beta$ leading to consistency of the MMD-Bayes are:
\begin{align*}
\beta & \geq  ( f(\theta^*) - \log \mathcal{L} ) \textcolor{black}{n}  \\
& = \frac{\textcolor{black}{n}}{2}\bigg\{\|\theta^*\|+\sqrt{\frac{4\sigma^2 + \gamma^2}{2n}} \left(1+\frac{4\sigma^2}{\gamma^2}\right)^{\frac{d}{4}}\bigg\}^2 + \frac{d \textcolor{black}{n}\log n}{2}  \\ 
& \quad \quad \quad \quad \quad \quad \quad - \frac{d \textcolor{black}{n}}{2} \log(4\sigma^2+\gamma^2) - \frac{d^2\textcolor{black}{n}}{4} \log\bigg(1+\frac{4\sigma^2}{\gamma^2}\bigg) +\textcolor{black}{n} \log \Gamma(d/2+1) .
\end{align*}

In particular, when $\gamma^2$ is of order $d$, then using Stirling's approximation, we get a lower bound on the valid values of $\beta$ of order (up to a logarithmic factor):
$$
\textcolor{black}{n} \max\big(\|\theta^*\|^2,d\big) \lesssim \beta.
$$

%This is sensible. Indeed, the lower bound on the prior mass is decreasing in $n^{-d/2}$ (for $d$ fixed) which is much slower than $e^{-\sqrt{n}}$. Hence, the $\beta$ does not really need to be adjusted in order to obtain a mass of at least $e^{-\sqrt{n}}$, and that explains why the lower bound $\frac{1}{\sqrt{n}}$ on $\beta$ (for $d$ fixed) is really low. At the opposite, the dependence in $d$ (for $n$ fixed) is approximately of order $d^{-d}$ which is really fast, and hence the range of valid values of $\beta$ is much more restricted ($\beta\geq d$ for $n$ fixed) in order to obtain a lower bound on the mass $e^{-\sqrt{n}}$ that does not depend on $d$.

\section{Computation of the extended prior mass.}\label{apd:extendedpriormass}

The computation of Condition \eqref{extendedpriormass} is of major interest. We investigate here the case of a Gaussian model $P_\theta=\mathcal{N}(\theta,\sigma^2)$, a Gaussian mean-field variational approximation $\mathcal{F}=\{\mathcal{N}(m,\textnormal{diag}(s^2))/m\in\mathbb{R}^d, s\in\mathbb{R}^d_{>0}\}$, a standard Gaussian prior $\pi=\mathcal{N}(0,1)$ and a Gaussian kernel $k(x,y) = \exp(-\|x-y\|^2/\gamma^2)$.

Let us define $\rho_n=\mathcal{N}\big(\theta^*,s^2 I_d\big)$ where $s^2=\frac{4\sigma^2 + \gamma^2}{2dn} \left(1+\frac{4\sigma^2}{\gamma^2}\right)^{\frac{d}{2}}$. Then:
\begin{align*}
\textrm{KL}(\rho_{n}\|\pi) & = \frac{1}{2} \sum_{j=1}^d \bigg\{ \theta_j^{*2} + s^2 - \log(s^2) - 1 \bigg\} \\
& = \frac{4\sigma^2 + \gamma^2}{2dn} \left(1+\frac{4\sigma^2}{\gamma^2}\right)^{\frac{d}{2}} + \frac{d\log(2dn) + \|\theta^*\|^2 - d - d\log(4\sigma^2+\gamma^2)}{2} \\
& \quad \quad \quad \quad \quad \quad \quad \quad \quad \quad \quad \quad \quad \quad \quad \quad \quad \quad \quad \quad - \frac{d^2}{4} \log\left(1+\frac{4\sigma^2}{\gamma^2}\right) ,
\end{align*}
and
\begin{align*}
\int \mathbb{D}_k^{\textcolor{black}{2}}(P_{\theta^*},P_{\theta}) \rho_{n}(d\theta) & = 2 \left(\frac{\gamma^2}{4\sigma^2 + \gamma^2}\right)^{\frac{d}{2}} \bigg( 1 - \int \exp\left(-\frac{\|\theta-\theta^*\|^2}{4\sigma^2+\gamma^2}\right) \rho_{n}(d\theta) \bigg) \\
& = 2 \left(\frac{\gamma^2}{4\sigma^2 + \gamma^2}\right)^{\frac{d}{2}} \bigg( 1 - \int e^{-\|\theta\|^2} \mathcal{N}\left(d\theta\bigg|0,\frac{s^2}{{4\sigma^2+\gamma^2}} I_d\right) \bigg) \\
& = 2 \left(\frac{\gamma^2}{4\sigma^2 + \gamma^2}\right)^{\frac{d}{2}} \bigg( 1 - \text{Det}\left(I_d+2\frac{s^2}{{4\sigma^2+\gamma^2}}I_d\right)^{-1/2}  \bigg) \\
& = 2 \left(\frac{\gamma^2}{4\sigma^2 + \gamma^2}\right)^{\frac{d}{2}} \bigg( 1 - \prod_{j=1}^d \left(1+\frac{2s^2}{{4\sigma^2+\gamma^2}}\right)^{-1/2}  \bigg) \\
& = 2 \left(\frac{\gamma^2}{4\sigma^2 + \gamma^2}\right)^{\frac{d}{2}} \bigg( 1 - \left(1+\left(1+\frac{4\sigma^2}{\gamma^2}\right)^{\frac{d}{2}} \frac{1}{{dn}}\right)^{-d/2}  \bigg) \\
& \leq 2 \left(\frac{\gamma^2}{4\sigma^2 + \gamma^2}\right)^{\frac{d}{2}}  \bigg( 1 -\left(1-\frac{d}{2}\left(1+\frac{4\sigma^2}{\gamma^2}\right)^{\frac{d}{2}} \frac{1}{dn}\right) \bigg) = \frac{1}{n} .
\end{align*}

Hence, the robust extended prior mass condition is satisfied as soon as 
\begin{align*}
\beta \geq \frac{4\sigma^2 + \gamma^2}{2d} \left(1+\frac{4\sigma^2}{\gamma^2}\right)^{\frac{d}{2}} + & \frac{n(d\log(2dn) + \|\theta^*\|^2 - d - d\log(4\sigma^2+\gamma^2))}{2} \\
& \quad \quad \quad \quad \quad \quad \quad \quad - \frac{d^2n}{4} \log\left(1+\frac{4\sigma^2}{\gamma^2}\right) .
\end{align*}
When $\gamma^2$ is of order $d$, this leads to a bound of order (up to a logarithmic factor):
$$
\textcolor{black}{n} \max\big(\|\theta^*\|^2,d\big) \lesssim \beta ,
$$
and we recover the bound that we found for the exact MMD-Bayes.

\section{Projected Stochastic Gradient Algorithm for VI.}\label{apd:algorithm}

In this section, we provide details of a stochastic gradient algorithm (PSGAVI) to compute the Gaussian mean-field approximation, with a necessary projection step if $\mathcal{M}\subsetneq \mathbb{R}^d$ and $\mathcal{S}\subsetneq \mathbb{R}^d_{>0}$. We assume that $\mathcal{M}\subset\mathbb{R}^d$ and $\mathcal{S}\subset\mathbb{R}^d_{>0}$ are closed and convex sets so that the orthogonal projection $\Pi_{\mathcal{M}}$ on $\mathcal{M}$ and $\Pi_{\mathcal{S}}$ on $\mathcal{S}$ are well-defined. We choose a standard Gaussian prior $\pi=\mathcal{N}(0,1)$.

Another important assumption is that the model is generative, i.e that one can easily sample from distributions belonging to the model $\{P_\theta,\theta\in\Theta\}$. The main idea of the algorithm \citep{Roy2015,li15,Briol2019} is then to approximate the gradient of the criterion to minimize $\textnormal{KL}(\mathcal{N}(m,\textnormal{diag}(s^2))\|\pi_n^\beta)$ using an unbiased U-statistic estimate based on random samples from the generative model, and to use a projected stochastic gradient algorithm. We recall that we use the componentwise multiplication.

\subsection*{Criterion to minimize:}

As explained in Appendix \ref{apd:extendedpriormass}, the optimization program is equivalent to minimizing:
$$
\underset{(m,s) \in \mathcal{M}\times\mathcal{S}}{\arg\min} \, \bigg\{  \int \mathbb{D}_k^{\textcolor{black}{2}}(P_{\theta},\hat{P}_n) \mathcal{N}({\rm d}\theta|m,\textnormal{diag}(s^2)) + \frac{1}{2\beta} \sum_{j=1}^d \bigg[ m_j^2 + s_j^2 - \log(s_j^2) - 1 \bigg] \bigg\} .
$$
We know that: $$ 
\mathbb{D}_k^{\textcolor{black}{2}}(P_\theta,\hat{P}_n)
= \mathbb{E}_{X,X'\sim P_\theta} [k(X,X')]
- \frac{2}{n}\sum_{i=1}^n \mathbb{E}_{X\sim P_\theta}[k(X_i,X)]
+  \frac{1}{n^2}\sum_{1\leq i,j\leq n} k(X_i,X_j) .
$$

Hence, the criterion to minimize is:
\begin{align*}
R_n(m,s) & := \int \mathbb{E}_{X,X'\sim P_\theta} [k(X,X')] \mathcal{N}({\rm d}\theta|m,\textnormal{diag}(s^2)) \\
& - \int \frac{2}{n}\sum_{i=1}^n \mathbb{E}_{X\sim P_\theta}[k(X_i,X)] \mathcal{N}({\rm d}\theta|m,\textnormal{diag}(s^2)) + \frac{1}{2\beta} \sum_{j=1}^d \bigg\{ m_j^2 + s_j^2 - \log(s_j^2) - 1 \bigg\} .
\end{align*}

\subsection*{Gradient computation:}

The first-order gradient algorithm PSGAVI requires the computation of the gradient of the criterion $R_n$ with respect to $m$ and $s$. In the following, we will use componentwise operations.

The expression of $R_n$ contains two terms that can be written as $\int f(\theta) \mathcal{N}({\rm d}\theta|m,\textnormal{diag}(s^2))$, and the derivative of this expectation can be hard to evaluate. We use the so-called reparameterization trick which is very popular in the variational inference community and approximate the expectation by a stochastic gradient estimator:
$$
\int \nabla_m f(m+s\theta) \mathcal{N}({\rm d}\theta|0,I_d) \approx \frac{1}{M} \sum_{k=1}^M \nabla_m f(m+s\theta^k)
$$
and
$$
\int \nabla_{s}f(m+s\theta) \mathcal{N}({\rm d}\theta|0,I_d) \approx \frac{1}{M} \sum_{k=1}^M \nabla_{s} f(m+s\theta^k)
$$
where $M$ denotes the number of samples $\theta^k$ drawn from the standard Gaussian.

Hence, the gradients of the criterion are:
\begin{align*}
\nabla_m R_n(m,s) \approx \frac{1}{M} \sum_{k=1}^M \nabla_m \mathbb{E}_{X,X'\sim P_{m+s\theta^k}} &[k(X,X')] \\
& - \frac{1}{M} \frac{2}{n} \sum_{k=1}^M \sum_{i=1}^n \nabla_m \mathbb{E}_{X\sim P_{m+s\theta^k}}[k(X_i,X)] + \frac{1}{\beta} \cdot m ,
\end{align*}
\begin{align*}
\nabla_{s} R_n(m,s) \approx \frac{1}{M} \sum_{k=1}^M \nabla_{s} &\mathbb{E}_{X,X'\sim P_{m+s\theta^k}} [k(X,X')] \\
& - \frac{1}{M} \frac{2}{n} \sum_{k=1}^M \sum_{i=1}^n \nabla_{s} \mathbb{E}_{X\sim P_{m+s\theta^k}}[k(X_i,X)] + \frac{1}{\beta} ( s - s^{-1} ) .
\end{align*}

%Moreover, assuming that for any $x$, $\theta\mapsto p_{\theta}(x)$ is differentiable with respect to $\theta$ and that there is a nonnegative function $g(x,x')$ such that $|k(x,x')\nabla_{\theta}[p_{\theta}(x) p_{\theta}(x')] | \leq g(x,x') $ for any $\theta\in\Theta$ and $\iint g(x,x') \mu({\rm d}x )\mu({\rm d}x')< \infty$, we have:
Moreover, using the log-derivative trick for differentiable log-densities:
$$
\nabla_\theta \mathbb{E}_{X,X'\sim P_{\theta}} [k(X,X')] = 2 \mathbb{E}_{X,X'\sim P_\theta} \bigg[ k(X,X') \nabla_\theta[\log p_\theta(X) ] \bigg] ,
$$
$$
\nabla_\theta \mathbb{E}_{X\sim P_{\theta}}[k(X_i,X)] = \mathbb{E}_{X\sim P_\theta} \bigg[ k(X_i,X) \nabla_\theta[\log p_\theta(X) ] \bigg] .
$$
%Thus, we have:
%\begin{align*}
%{\nabla_m R_n}(m,s) \approx \frac{1}{M} \sum_{k=1}^M 2 \mathbb{E}_{X,X'\sim P_{m+s\theta^k}} \bigg[ \bigg\{ k(X,X') - \frac{1}{n} \sum_{i=1}^n k(X_i,X) \bigg\} \nabla_m&[\log p_{m+s\theta^k}(X) ] \bigg]\\
%& + \frac{1}{\beta} \cdot m 
%\end{align*}
%and
%\begin{align*}
%{\nabla_{s} R_n}(m,s) \approx \frac{1}{M} \sum_{k=1}^M 2 \mathbb{E}_{X,X'\sim P_{m+s\theta^k}} \bigg[ \bigg\{ k(X,X') - \frac{1}{n} \sum_{i=1}^n k(X_i,X) \bigg\} \nabla_{s}&[\log p_{m+s\theta^k}(X)] \bigg]\\
%& + \frac{1}{\beta} ( s - s^{-1} ) .
%\end{align*}
Hence, we obtain stochastic gradients using i.i.d samples $(Y_1,\dots,Y_M)$ from $P_\theta$:
\begin{align*}
\widehat{\nabla_m R_n}(m,s) = \frac{2}{M^2} \sum_{k=1}^M \sum_{j=1}^M \bigg\{ \frac{1}{M-1} \sum_{\ell \neq j} k(Y_j,Y_\ell) - \frac{1}{n} \sum_{i=1}^nk(X_i,Y_j) \bigg\} \nabla_m&[\log p_{m+s\theta^k}(Y_j) ] \\
& + \frac{1}{\beta} \cdot m 
\end{align*}
and
\begin{align*}
\widehat{\nabla_s R_n}(m,s) = \frac{2}{M^2} \sum_{k=1}^M \sum_{j=1}^M \bigg\{ \frac{1}{M-1} \sum_{\ell \neq j} k(Y_j,Y_\ell) - \frac{1}{n} \sum_{i=1}^nk(X_i,Y_j) \bigg\} \nabla_{s}&[\log p_{m+s\theta^k}(Y_j)] \\
& + \frac{1}{\beta} ( s - s^{-1} ) .
\end{align*}

Note that when the log-density $\log p_\theta(x)$ is not differentiable, it is often possible to compute the stochastic gradients involving $\theta^1,...,\theta^M$ directly, without using the Monte Carlo samples $Y_1,...,Y_M$. For instance, when the model is a uniform distribution $P_\theta=\mathcal{U}([\theta-a,\theta+a])$ and when the kernel can be written as $k(x,y)=K(x-y)$ for some function $K$ (such as Gaussian kernels), we have:
$$
\mathbb{E}_{X,X'\sim P_{\theta}} [k(X,X')] = \int_{\theta-a}^{\theta+a} \int_{\theta-a}^{\theta+a} K(x-x') dx dx' = \int_{-a}^{+a} \int_{-a}^{+a} K(x-x') dx dx' ,
$$
and
$$
\mathbb{E}_{X\sim P_{\theta}}[k(X_i,X)] = \int_{\theta-a}^{\theta+a} K(x-X_i) dx = \int_{\theta-a-X_i}^{\theta+a-X_i} K(x) dx .
$$
Hence,
$$
\nabla_m \mathbb{E}_{X,X'\sim P_{m+s\theta^k}} [k(X,X')] = 0 ,
$$
$$
\nabla_s \mathbb{E}_{X,X'\sim P_{\theta}} [k(X,X')] = 0 ,
$$
and
$$
\nabla_m \mathbb{E}_{X\sim P_{m+s\theta^k}}[k(X_i,X)] = K(m+s\theta^k+a-X_i) - K(m+s\theta^k-a-X_i) ,
$$
$$
\nabla_s \mathbb{E}_{X\sim P_{m+s\theta^k}}[k(X_i,X)] = s K(m+s\theta^k+a-X_i) - s K(m+s\theta^k-a-X_i) .
$$

%Then, we finally obtain the following stochastic gradients:
%\begin{align*}
%\widehat{\nabla_m R_n}(m,s) = - \frac{2}{nM} \sum_{k=1}^M \sum_{i=1}^n K(m+s\theta^k+a-X_i) - K(m+s\theta^k-a-X_i) + \frac{1}{\beta} \cdot m 
%\end{align*}
%and
%\begin{align*}
%\widehat{\nabla_s R_n}(m,s) = \frac{-2s}{nM} \sum_{k=1}^M \sum_{i=1}^n  \bigg\{ K(m+s\theta^k+a-X_i) - K(m+s\theta^k-a-X_i) \bigg\}+ \frac{1}{\beta} ( s - s^{-1} ) .
%\end{align*}

%given that
%$$
%\mathcal{N}({\rm d}\theta|m,\textnormal{diag}(s^2)) = (2\pi)^{-d/2} \bigg(\prod_{j=1}^d s_j^2\bigg)^{-1/2} \exp\bigg(-\frac{1}{2}\sum_{j=1}^d {m_j^2}{s_j^{-2}}\bigg) ,
%$$
%we have:
%$$
%\nabla_m \mathcal{N}({\rm d}\theta|m,\textnormal{diag}(s^2)) = - \mathcal{N}({\rm d}\theta|m,\textnormal{diag}(s^2)) s^{-2} m
%$$
%and
%$$
%\nabla_{s^2} \mathcal{N}({\rm d}\theta|m,\textnormal{diag}(s^2)) = (2\pi)^{-d/2} \bigg(\prod_{j=1}^d s_j^2\bigg)^{-1/2} \exp\bigg(-\frac{1}{2}\sum_{j=1}^d {m_j^2}{s_j^{-2}}\bigg) 
%$$
%using componentwise operations.

\vspace{0.2cm}

\subsection*{PSGAVI algorithm:}

The Projected Stochastic Gradient Algorithm for Variational Inference is the following:

\begin{algorithm2e}
\caption{PSGAVI}
\label{alg:psgavi}
 % older versions of algorithm2e have \dontprintsemicolon instead
 % of the following:
 %\DontPrintSemicolon
 % older versions of algorithm2e have \linesnumbered instead of the
 % following:
 %\LinesNumbered
\KwIn{A dataset $(X_1,...,X_n)$, a model $\{P_\theta,\theta\in\Theta\subset\mathbb{R}^d\}$, a kernel $k$, a sequence of steps $(\eta_t)_{t\geq 1}$, a batch size $M$, a stopping time $T$, closed and convex sets $\mathcal{M} \subset \mathbb{R}^d$ and $\mathcal{S} \subset \mathbb{R}^d_{>0}$, an initial mean $m^{(0)}\in\mathcal{M}$, an initial covariance matrix $\text{diag}(s^{(T)2})$ where $s^{(0)} \in \mathcal{S}$.}
\KwOut{A variational Gaussian density $\mathcal{N}(\theta|m^{(T)},\text{diag}(s^{(T)2}))$}
\For{$t\leftarrow 1$ \KwTo $T$}{
  draw $(Y_1,\dots,Y_M)$ i.i.d from $P_{m^{(t-1)}}$\;
  $m^{(t)} = \Pi_{\mathcal{M}} \left( m^{(t-1)} - \eta_t \widehat{\nabla_m R_n}(m^{(t-1)},s^{(t-1)}) \right) $\;
  $s^{(t)} = \Pi_{\mathcal{S}} \left( s^{(t-1)} - \eta_t \widehat{\nabla_s R_n}(m^{(t-1)},s^{(t-1)}) \right) $\;
  %$\theta^{(t)} = \Pi_{\Theta} \left\{ \theta^{(t-1)} - \frac{2 \eta_t }{M}\sum_{j=1}^M \left[ \frac{1}{M-1} \sum_{\ell \neq j} k(Y_j,Y_\ell) - \frac{1}{n} \sum_{i=1}^n  k(X_i,Y_j) \right]  \nabla_{\theta^{(t-1)}}[\log p_{\theta^{(t-1)}}(Y_j) ] \right\} $\;
}
\end{algorithm2e}

\textcolor{black}{A theoretical analysis of the algorithm, in the spirit of~\cite{cheriefonline}, goes beyond the scope of this paper and will be the object of future works.}

\section{Numerical simulations.}\label{apd:simulations}

In this section, we provide numerical experiments that support our theoretical results. We studied three different and simple problems: the robust unidimensional Gaussian mean estimation, the robust multidimensional Gaussian mean estimation, and the uniform location parameter estimation.

In each experiment, we compared the mean of the variational approximation of the MMD-Bayes to other estimators: the median estimator and the MLE in the Gaussian mean estimation problem, i.e the componentwise median and the arithmetic mean, and the method of moments and the MLE in the uniform location parameter estimation problem, i.e the arithmetic mean and the average between the largest and the lowest values. We chose a value of $\beta$ of $e^{nd}$, a number of Monte-Carlo samples equal to $n$ and a step-size of $\eta_t=1/\sqrt{t}$. We used the Gaussian kernel $k(x,y)=e^{-\|x-y\|^2/d}$ where $d$ is the dimension and we repeated each experiment $100$ times. 

\vspace{0.2cm}
\textbf{Gaussian mean estimation problem:}
for both the uni- and the multidimensional cases, we randomly sampled $n=200$ i.i.d observations from a Gaussian distribution $\mathcal{N}(\theta,I_d)$ where $I_d$ is the identity matrix of dimension $d$ and $\theta$ is the vector with all components equal to $2$. Some proportion $\epsilon \in [0,0.2]$ of corrupted observations is replaced by independent samples which components are independently sampled from a standard Cauchy distribution $\mathcal{C}(0,1)$. We compared the mean of the variational approximation with the MLE (i.e the arithmetic mean) and the componentwise median using the squared root of the MSE.

\begin{figure}[htbp]
 % Caption and label go in the first argument and the figure contents
 % go in the second argument
\floatconts
  {fig:nodes}
  {}
  {\includegraphics[width=1.0\linewidth]{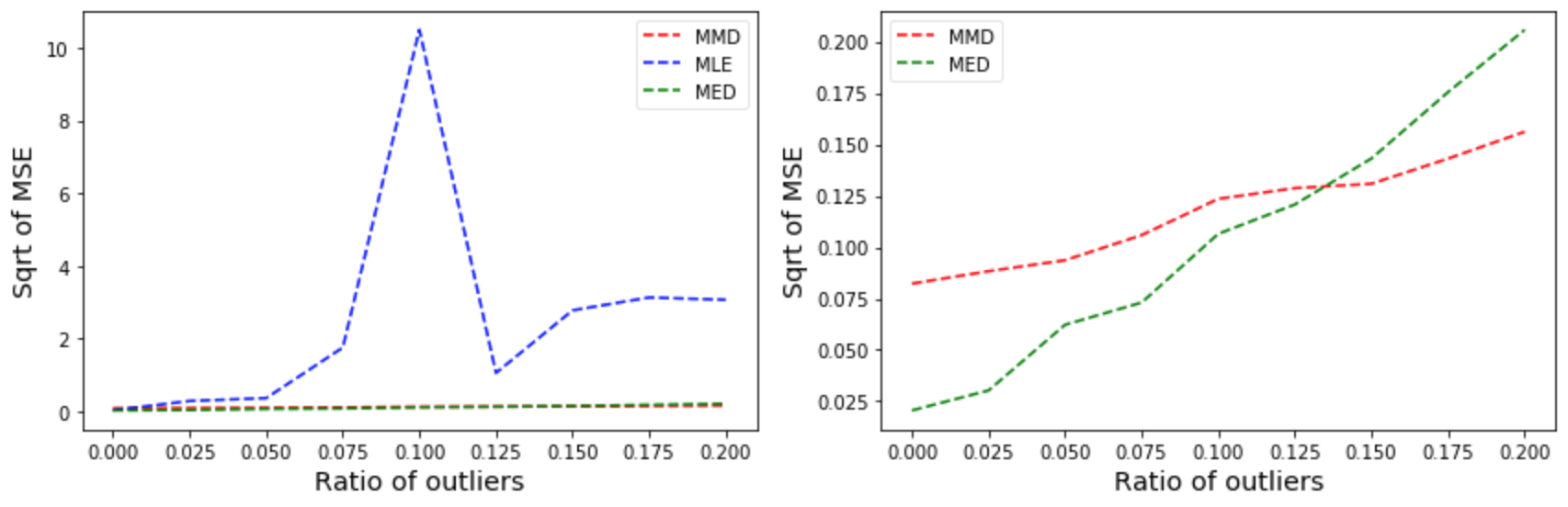}}
  \vspace{-0.5cm}
  {Figure 2 - Comparison of the square root of the MSE for the MMD estimator, the MLE and the median in the robust multidimensional Gaussian mean estimation problem for various values of the proportion of outliers. Here $d=15$.}
\end{figure}

\textbf{Uniform location parameter estimation problem:}
we randomly sampled $n=200$ i.i.d observations from a uniform distribution $\mathcal{U}\big([\theta-\frac{1}{2},\theta+\frac{1}{2}]\big)$ where $\theta=1$. Following the previous set of experiments, the proportion $\epsilon \in [0,0.2]$ of data is replaced by outliers from a Gaussian $\mathcal{N}(20,1)$. We compared the mean of the variational approximation with the MLE (i.e the average between the largest and the lowest values) and the method of moments estimator (i.e the arithmetic mean) using again the square root of the MSE.

\begin{figure}[htbp]
 % Caption and label go in the first argument and the figure contents
 % go in the second argument
\floatconts
  {fig:nodes}
  {}
  {\includegraphics[width=0.5\linewidth]{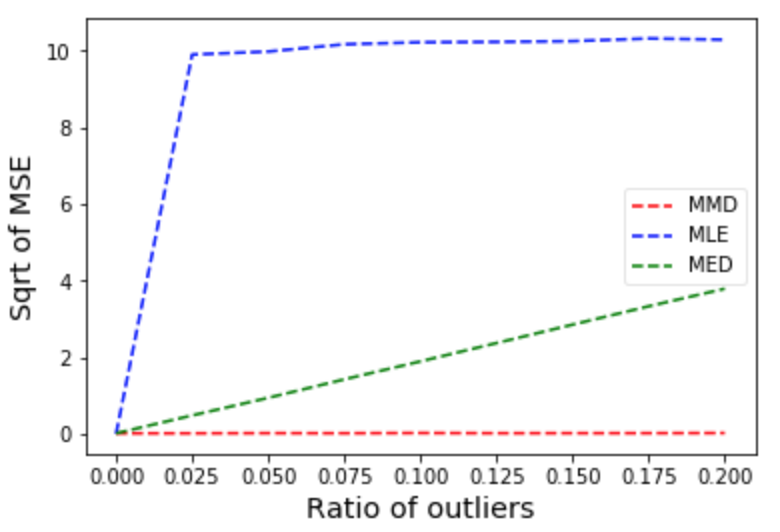}}
  \vspace{-0.5cm}
  {Figure 3 - Comparison of the square root of the MSE for the MMD estimator, the MLE and the method of moments in the robust estimation of the location parameter of a uniform distribution for various values of the proportion of outliers.}
\end{figure}

\textbf{Results:}
The error of our estimators as a function of the contamination ratio $\epsilon$ is plotted in Figures 1, 2 and 3. These plots show that our method is applicable to various problems and leads to a good estimator for all of them. Indeed, the plots in Figures 1 and 2 show that the MSE for the MMD estimator performs as well as the componentwise median and even better when the number of outliers in the dataset increases, much better than the MLE in the robust Gaussian mean estimation problem, and is not affected that much by the presence of outliers in the data. For the uniform location parameter estimation problem addressed in Figure 3, the MMD estimator is clearly the one that performs the best and is not affected by a reasonable proportion of outliers, contrary to the method of moments which square root of MSE is increasing linearly with $\epsilon$ and to the MLE that gives inconsistent estimates as soon as there is an outlier in the data.


\begin{thebibliography}{}

\bibitem[Alquier and Ridgway, 2017]{Tempered}
Alquier, P. and Ridgway, J. (2017).
\newblock Concentration of tempered posteriors and of their variational
  approximations.
\newblock {\em \textcolor{black}{The Annals of Statistics, to appear}}.

\bibitem[Alquier et~al., 2016]{alquier2016properties}
Alquier, P., Ridgway, J., and Chopin, N. (2016).
\newblock \textcolor{black}{On the properties of variational approximations of {G}ibbs
 posteriors.}
\newblock {\em Journal of Machine Learning Research}, 17(239):1--41.

\bibitem[Baraud and Birg{\'e}, 2017]{BaraudBirge2017RhoBayes}
Baraud, Y., and Birg{\'e}, L. (2017).
\newblock Robust Bayes-like estimation: rho-Bayes estimation.
\newblock {\em arXiv preprint arXiv:1711.08328}.

\bibitem[Barron et~al., 1999]{barron1999consistency}
Barron, A. and Schervish, M. J., and Wasserman, L. (1999).
\newblock \textcolor{black}{The consistency of posterior distributions in nonparametric problems.}
\newblock {\em The Annals of Statistics}, 27(2):536--561.

\bibitem[Bhattacharya et~al., 2019]{bhattacharya2016bayesian}
Bhattacharya, A., Pati, D., and Yang, Y. (2019).
\newblock Bayesian fractional posteriors.
\newblock {\em The Annals of Statistics}, 47(1), 39--66.

\bibitem[Bhattacharya et~al., 2018]{Plage}
Bhattacharya, A., Pati, D., and Yang, Y. (2018).
\newblock \textcolor{black}{On statistical optimality of variational {Bayes}}.
\newblock {\em Proceedings of Machine Learning Research}, 84 - AISTAT.

\bibitem[Bissiri et~al., 2016]{Bissiri2016UpdateBayes}
Bissiri, P. G., Holmes, C. C. \& Walker, S. G. (2016).
\newblock A general framework for updating belief distributions.
\newblock {\em Journal of the Royal Statistical Society: Series B}, 78:5, pp.1103-1130.

%\bibitem[Blei et~al., 2017]{blei2017variational}
%Blei, D.M., Kucukelbir, A., and McAuliffe, J.D. (2017).
%\newblock Variational inference: A review for statisticians.
%\newblock {\em Journal of the American Statistical Association}.

\bibitem[Briol et~al., 2019]{Briol2019}
Briol, F.-X., Barp, A. Duncan, A. B. and Girolami, M. (2019).
\newblock Statistical Inference for Generative Models via Maximum Mean Discrepancy.
\newblock {\em arXiv preprint arXiv:1906.05944}.

\bibitem[Catoni, 2007]{MR2483528}
Catoni, O. (2007).
\newblock { {PAC}-{B}ayesian supervised classification: the thermodynamics of statistical learning}.
\newblock { \em Institute of Mathematical Statistics Lecture Notes }---Monograph Series, 56. Institute of Mathematical Statistics, Beachwood, OH.

\bibitem[Catoni, 2012]{Catoni2012}
Catoni, O. (2012).
\newblock Challenging the empirical mean and empirical variance: a deviation study.
\newblock {\em Annales de l'IHP Probabilités et statistiques}, Vol. 48, No. 4, pp. 1148-1185.

%\bibitem[Catoni and Giulini, 2017]{CatoniGiulini2017}
%Catoni, O. and Giulini, I. (2017).
%\newblock Dimension free pac-bayesian bounds for the estimation of the mean of a random vector.
%\newblock In {\em NIPS-2017 Workshop (Almost) 50 Shades of Bayesian Learning: PAC-Bayesian trends and insights}.

\bibitem[Ch{\'e}rief-Abdellatif and Alquier, 2018]{cherief2018consistency}
Ch{\'e}rief-Abdellatif, B.-E. and Alquier, P. (2018).
\newblock Consistency of variational bayes inference for estimation and model selection in mixtures.
\newblock {\em Electronic Journal of Statistics}, 12(2):2995--3035.

\bibitem[Ch{\'e}rief-Abdellatif et~al, 2019]{cheriefonline}
Chérief-Abdellatif, B. E., Alquier, P., \& Khan, M. E. (2019).
\newblock \textcolor{black}{A Generalization Bound for Online Variational Inference.}
\newblock {\em Proceedings of ACML}, to appear.

\bibitem[Ch{\'e}rief-Abdellatif, 2019a]{cherief2018consistency2}
Ch{\'e}rief-Abdellatif, B.-E. (2019).
\newblock Consistency of ELBO maximization for model selection.
\newblock {\em Proceedings of the 1{st} Symposium on Advances in Approximate Bayesian Inference (AABI 2019)}.

\bibitem[Ch{\'e}rief-Abdellatif, 2019b]{cherief2019deep}
Ch{\'e}rief-Abdellatif, B.-E. (2019).
\newblock Convergence rates of variational inference in sparse deep learning.
\newblock {\em arXiv preprint arXiv:1908.04847v2}.

\bibitem[Dziugaite et~al, 2015]{Roy2015}
Dziugaite, G. K., Roy, D. M., \& Ghahramani, Z. (2015).
\newblock {Training generative neural networks via maximum mean discrepancy optimization}.
\newblock {\em Proceedings of Uncertainty in Artificial Intelligence (UAI 2015)}.

\bibitem[Fukumizu et~al., 2013]{KernelBayes2013}
Fukumizu, K., Song, L., \& Gretton, A. (2013).
\newblock Kernel Bayes’ Rule: Bayesian Inference with Positive Definite Kernels.
\newblock  {\em Journal of Machine Learning Research} 14 (2013) 3753-3783, 18(1), 3002-3048.

\bibitem[Futami et~al, 2017]{FutamiRobustBayes2017}
Futami, F., Sato, I., \& Sugiyama, M. (2017).
\newblock { Variational inference based on robust divergences}.
\newblock { \em arXiv preprint arXiv:1710.06595}.

\bibitem[Ghosal et~al., 2000]{ghosal2000convergence}
Ghosal, S., Ghosh, J.K., and Van~der Vaart, A. (2000).
\newblock Convergence rates of posterior distributions.
\newblock {\em The Annals of Statistics}, pages 500--531.

\bibitem[Ghosal and Basu, 2016]{GhoshBasuRobustBayes2016}
Ghosh, A. and Basu, A. (2016).
\newblock Robust Bayes estimation using the density power divergence.
\newblock {\em The Annals of Statistics}, pages 500--531.

\bibitem[Gr{\"u}nwald and Van~Ommen, 2017]{grunwaldmisspecification}
Gr{\"u}nwald, P.~D. and Van~Ommen, T. (2017).
\newblock Inconsistency of {B}ayesian inference for misspecified linear models,
  and a proposal for repairing it.
\newblock {\em Bayesian Analysis}, 12(4):1069--1103.

\bibitem[Holland, 2019]{Holland2019}
Holland, M. J. (2019).
\newblock {PAC-Bayes under potentially heavy tails.}
\newblock {\em arXiv preprint arXiv:1905.07900}.

\bibitem[Hooker and Vidyashankar, 2014]{HookerRobustBayes2014}
Hooker, G. and Vidyashankar, A. N. (2014).
\newblock Bayesian model robustness via disparities.
\newblock {\em Test, 23(3):556–584}.

%\bibitem[H{\"u}ber, 1964]{HuberRobustness}
%H{\"u}ber, P.~J. (1964).
%\newblock Robust estimation of a location parameter.
%\newblock {\em The annals of mathematical statistics}, 35(1):73–101, 1964.

\bibitem[Jewson et~al, 2018]{JewsonRobustBayes2018}
Jewson, J., Smith, J., \& Holmes, C. (2018).
\newblock Principles of Bayesian inference using general divergence criteria.
\newblock In {\em Advances in Neural Information Processing Systems (NeurIPS 2018)}, pp. 262-271.

\bibitem[Knoblauch et~al, 2019]{Knoblauch2019}
Knoblauch, J., Jewson, J., \& Damoulas, T. (2019).
\newblock {Generalized Variational Inference.}
\newblock {\em arXiv preprint arXiv:1904.02063}.

\bibitem[Li and Zemel, 2015]{li15}
Li, Y., Swersky, K. \& Zemel, R. (2015).
\newblock Generative moment matching networks.
\newblock In {\em International Conference on Machine Learning (ICML 2015)}, pp. 1718-1727.

\bibitem[Massart, 2005]{MR2319879}
Massart, P. (2007).
\newblock { Concentration inequalities and model selection}.
\newblock {\em Saint-Flour Summer School on Probability Theory 2003} (Jean Picard
  ed.), Lecture Notes in Mathematics. Springer.
  
\bibitem[Muandet et~al., 2017]{Fuku2017}
Muandet, K., Fukumizu, K., Sriperumbudur, B., \& Sch\"olkopf, B. (2017).
\newblock Kernel mean embedding of distributions: A review and beyond.
\newblock  {\em Foundations and Trends® in Machine Learning}, 10(1-2), 1-141.

\bibitem[Nakagawa et~al., 2019]{NakagawaRobustBayes2019}
Nakagawa, T. and Hashimoto, S. (2019).
\newblock Robust bayesian inference via $\gamma$-divergence.
\newblock  {\em Communications in Statistics-Theory and Methods}, pages 1–18.

\bibitem[Park et~al., 2016]{Park2016}
Park, M., Jitkrittum, W., and Sejdinovic, D. (2016).
\newblock  { K2-ABC: Approximate Bayesian computation with kernel embeddings}.
\newblock  {\em Proceedings of the Fourteenth International Conference on Artificial Intelligence and Statistics (AISTATS 2016)}, volume 51, pp. 51:398-407.

\bibitem[Ridgway, 2017]{RidgwayKABC17}
Ridgway, J. (2017).
\newblock { Probably approximate Bayesian computation: nonasymptotic convergence of ABC under mispecification}.
\newblock {\em arXiv preprint arXiv:1707.05987v2}.

\bibitem[Syring and Martin, 2018]{SM2018}
Syring, N., \& Martin, R. (2018).
\newblock \textcolor{black}{Calibrating general posterior credible regions}.
\newblock  {\em  Biometrika}, 106(2), 479-486.

\bibitem[Tolstikhin et~al., 2017]{Tol2017}
Tolstikhin, I., Sriperumbudur, B. K., \& Muandet, K. (2017).
\newblock Minimax estimation of kernel mean embeddings.
\newblock  {\em Journal of Machine Learning Research}, 18(1), 3002-3048.

\end{thebibliography}
\end{document}